\newcommand{\C}{\mathbb{C}}
\newcommand{\Z}{\mathbb{Z}}
\newcommand{\Q}{\mathbb{Q}}
\newcommand{\F}{\mathbb{F}}
\newcommand{\A}{\mathbb{A}}
\newcommand{\cG}{\mathcal{G}}
\newcommand{\cL}{\mathcal{L}}
\newcommand{\g}{\mathfrak{g}}
\newcommand{\Gal}{\operatorname{Gal}}
\newcommand{\Spec}{\operatorname{Spec}\,}
\newcommand{\Ad}{\operatorname{Ad}}
\newcommand{\ad}{\operatorname{ad}}
\newcommand{\pr}{\operatorname{pr}}
\renewcommand{\sc}{\operatorname{sc}}
\renewcommand{\ss}{\operatorname{ss}}
\newcommand{\der}{\operatorname{der}}
\newcommand{\Aut}{\operatorname{Aut}}
\newcommand{\GL}{\mathrm{GL}}
\newcommand{\SL}{\mathrm{SL}}
\newtheorem{thm}{Theorem}[section]
\newtheorem{cor}[thm]{Corollary}
\newtheorem{prop}[thm]{Proposition}
\newtheorem{lemma}[thm]{Lemma}
\newtheorem{conj}[thm]{Conjecture}
\newtheorem{remark}[thm]{Remark}
\begin{document}
\title{Adelic openness without the Mumford-Tate conjecture}

\thanks{$^*$Chun Yin Hui is supported by the National Research Fund, Luxembourg, and cofunded under the Marie Curie Actions of the European Commission (FP7-COFUND)}
\thanks{$^{**}$Michael Larsen was partially supported by NSF grant DMS-1101424 and a Simons Fellowship.}
\maketitle

\begin{center}
Chun Yin Hui$^*$ and Michael Larsen$^{**}$
\end{center}

\vspace{.1in}

\begin{center}
$^*$Mathematics Research Unit, University of Luxembourg,\\
6 rue Richard Coudenhove-Kalergi, L-1359 Luxembourg\\
Email: \url{pslnfq@gmail.com}
\end{center}

\vspace{.1in}

\begin{center}
$^{**}$Department of Mathematics, Indiana University, \\
Bloomington, IN 47405, U.S.A.\\
Email: \url{mjlarsen@indiana.edu}
\end{center}

\vspace{.1in}
\begin{abstract}
Let $X$ be a non-singular projective variety over a number field $K$, $i$ a non-negative integer, and $V_{\A}$, 
the $i$th \'etale cohomology of $\bar X:=X\times_K\bar K$ with coefficients in the ring of finite adeles $\A_f$ over $\Q$. 
Assuming the \emph{Mumford-Tate conjecture}, we formulate a conjecture (Conj. \ref{MT}) 
describing the largeness of the image of the absolute Galois group $G_K$ in $H(\A_f)$ under the adelic Galois representation 
$\Phi_{\A}: G_K \to \Aut(V_{\A}),$ 
where $H$ is the corresponding Mumford-Tate group. 
The motivating example is a celebrated theorem of Serre, which asserts that if $X$ is an elliptic curve without complex multiplication over $\bar K$ and $i=1$, then $\Phi_{\A}(G_K)$ is an open subgroup of $\GL_2(\hat \Z)\subset \GL_2(\A_f)=H(\A_f)$. We state and in some cases prove a weaker conjecture (Conj. \ref{main}) which does not require the Mumford-Tate conjecture but which, together with it, imply Conjecture \ref{MT}. 
We also relate our conjectures to Serre's conjectures on \emph{maximal motives} \cite{Serre-GM}.

\end{abstract}

\newpage
\section{Introduction}

Let $X$ be a non-singular projective variety over a number field $K$, $i$ a non-negative integer,  
and $V_{\A} := H^i(\bar X,\A_f)\cong \A_f^n$, the \'etale cohomology of $\bar X := X\times_K\bar K$ with coefficients in the ring $\A_f := \hat \Z\otimes_{\Z}\Q$ 
of finite adeles over $\Q$.  We would like to formulate a conjecture describing the image of the Galois group $G_K := \Gal(\bar K/K)$
under the adelic Galois representation $\Phi_{\A}\colon G_K\to \Aut(V_{\A})\cong \GL_n(\A_f)$.
The motivating example is Serre's theorem \cite{Serre-IM}, which asserts that
if $X$ is an elliptic curve without complex multiplication  over $\bar K$ and $i=1$, then $\Phi_{\A}(G_K)$ is an open subgroup of $\GL_2(\hat \Z)\subset \GL_2(\A_f)$.

Let $V_\ell = H^i(\bar X,\Q_\ell)$.  We suppose henceforth that $K$ is chosen large enough that the $\ell$-adic Galois representation
$V_\ell$ is \emph{connected}.  We recall that this means that the Zariski closure $G_\ell$ of the image $\Phi_\ell(G_K)$ in $\Aut(V_\ell)$ is
connected; this condition does not depend on $\ell$ (see \cite{Serre-KR}).
The Mumford-Tate conjecture asserts that there exists a
connected, reductive algebraic group $H$ over $\Q$, 
the Mumford-Tate group, and for each $\ell$ an isomorphism $G_\ell \tilde\to H\times\Q_\ell$ arising from the comparison 
between $\ell$-adic \'etale cohomology and ordinary cohomology.  
The naive form of ``adelic openness'' would be the claim that via these isomorphisms, $\Phi_{\A}(G_K)$
is in fact an \emph{open} subgroup of $H(\A_f)$.  This is known not to hold in general.
In fact, Serre has conjectured that $\Phi_{\A}(G_K)$ is open if and only if
$\Phi_{\A}$ comes from a \emph{maximal motive} \cite[$\mathsection11$]{Serre-GM}.

To modify the conjecture in order to cover the non-maximal case
one must work with the simply connected semisimple part of $H$.
On the $\ell$-adic side, this works as follows.
By the Mumford-Tate conjecture, the $G_\ell$ are all reductive.
Let $G^{\der}_\ell$ denote the derived group of $G_\ell$, $G^{\sc}_\ell$ the universal covering group of $G^{\der}_\ell$, 
and $\pi_\ell\colon G^{\sc}_\ell\to G^{\der}_\ell$ the covering isogeny.
The commutator map $G_\ell\times G_\ell\to G_\ell$ lifts to a morphism $\kappa_\ell\colon G_\ell\times G_\ell\to G^{\sc}_\ell$, and likewise,
the commutator map on $H$ lifts to $\kappa\colon H\times H\to H^{\sc}$.
As $\kappa$ and $\kappa_\ell$ are not homomorphisms, we work with the groups
generated by the images rather than the images themselves.
Since the fields defined by $\ker\Phi_\ell$ are almost linearly disjoint
(see \cite{Serre-AI}) and conjecturally  (see Remark \ref{rem-con}), $\kappa_\ell(\Phi_\ell(G_K),\Phi_\ell(G_K))$ 
generates a hyperspecial maximal compact subgroup
of $G^{\sc}_\ell(\Q_\ell)$ for all $\ell\gg 1$, we make the following conjecture:

\begin{conj}
\label{weak-MT}
With notations as above, $\kappa(\Phi_\A(G_K),\Phi_\A(G_K))$ generates an open subgroup of  $H^{\sc}(\A_f)$.
\end{conj}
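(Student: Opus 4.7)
The plan is to decompose the problem into a \emph{local} analysis at each prime $\ell$ and a \emph{global} step combining the local data via an almost-linear-disjointness argument.

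For the local step, fix $\ell$ and assume the Mumford--Tate conjecture so that $G_\ell\cong H\times\Q_\ell$. The remarks preceding the statement already record that for $\ell\gg 1$, the set $\kappa_\ell(\Phi_\ell(G_K),\Phi_\ell(G_K))$ topologically generates a hyperspecial maximal compact subgroup $U_\ell\subset H^{\sc}(\Q_\ell)$. For the remaining finitely many primes I would argue openness of the generated subgroup directly: since $\Phi_\ell(G_K)$ is Zariski-dense in $G_\ell$ and $H^{\sc}$ is semisimple, the commutators $\kappa_\ell(\Phi_\ell(G_K),\Phi_\ell(G_K))$ are Zariski-dense in $H^{\sc}$, and the closed subgroup $U_\ell\subset H^{\sc}(\Q_\ell)$ they generate is an $\ell$-adic analytic subgroup whose Lie algebra is all of $\mathfrak{h}^{\sc}\otimes\Q_\ell$, hence is open.

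For the global step, set $U:=\prod_\ell U_\ell\subset H^{\sc}(\A_f)$; I would show that $\kappa(\Phi_\A(G_K),\Phi_\A(G_K))$ topologically generates a subgroup of finite index in $U$. The key input is Serre's almost-independence theorem \cite{Serre-AI}: after replacing $K$ by a finite extension, the image $\Phi_\A(G_K)$ surjects onto $\prod_{\ell\notin S}\Phi_\ell(G_K)$ for some finite set $S$ of primes. Applying $\kappa$ coordinatewise and invoking the local step at each place yields, up to a finite-index correction at the primes in $S$, the desired open subgroup of $H^{\sc}(\A_f)$.

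The main obstacle is precisely the gap between Serre's classical almost-independence, which controls the tower of Galois extensions cut out by the semisimple quotients of the $G_\ell$, and the statement actually required, which concerns the simply connected covers $G_\ell^{\sc}$. Lifting almost-independence through the central isogenies $\pi_\ell\colon G_\ell^{\sc}\to G_\ell^{\der}$ introduces a potential congruence obstruction in $\prod_\ell\ker(\pi_\ell)$, a profinite abelian group arising from the fundamental groups of the derived groups. Ruling this out appears to require precisely the conjectural strengthening referenced in Remark \ref{rem-con}---a congruence-subgroup property for the adelic Galois tower cut out by $H^{\sc}$---and it is this step, rather than either of the two preceding ones, that prevents an unconditional proof. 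Any partial result I could hope to obtain along these lines would therefore be of the following shape: the conjecture holds whenever the isogeny kernels $\ker(\pi_\ell)$ are uniformly controlled, e.g.\ when $H^{\der}$ is already simply connected or when one has explicit information about the centers of the $G_\ell$.
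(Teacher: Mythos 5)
Conjecture~\ref{weak-MT} is not proved in the paper; it is stated as a conjecture, and the paper only establishes conditional and partial results. So you should not expect to produce an unconditional proof, and you rightly stop short of one. Your two-step template --- a local analysis at each $\ell$ followed by a global assembly via Serre's almost-independence --- does mirror the machinery the paper actually builds (Proposition~\ref{3.1}, Corollary~\ref{equal}, and Theorem~\ref{Generation}), so the overall architecture is on target.

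However, your diagnosis of where the argument becomes conjectural is off. Remark~\ref{rem-con} does not assert a ``congruence-subgroup property for the adelic Galois tower cut out by $H^{\sc}$,'' and the obstruction is not the lifting of Serre's almost-independence through the central isogenies $\pi_\ell\colon G_\ell^{\sc}\to G_\ell^{\der}$. The global step is in fact handled unconditionally: Serre's theorem gives, after a finite base change, an honest direct product $\Phi_\A(G_{K'})=\prod_\ell\Phi_\ell(G_{K'})$, and the paper's commutator formalism (the morphism $\kappa$, Proposition~\ref{3.1}, Theorem~\ref{Avni}, Corollary~\ref{equal}) then assembles the $\ell$-adic data without any new congruence hypothesis. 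The genuinely conjectural input is Larsen's \emph{local maximality} conjecture \cite{Larsen}: for $\ell\gg 1$, the group $\Gamma_\ell^{\sc}$ is a hyperspecial maximal compact subgroup of $G_\ell^{\sc}(\Q_\ell)$. This is a statement about the size of the $\ell$-adic image at each individual prime, not about compatibility across primes or about kernels of isogenies. Relatedly, the partial results you propose (e.g.\ $H^{\der}$ simply connected, or control of $\ker\pi_\ell$) would not bypass this obstacle: even when $G_\ell^{\sc}=G_\ell^{\der}$ one still needs the image to be maximal compact at large $\ell$. The paper's actual unconditional case (Theorem~4.1) is for type~A monodromy, where \cite{HL} supplies exactly this local maximality; that is the correct shape of a partial result here.
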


In the light of recent work on word maps for $p$-adic and adelic groups, we can formulate the following slightly stronger variant of this conjecture: 

\begin{conj}
\label{MT}
With notations as above, we have:
\begin{itemize}
\item[(1)]
for some positive integer $k$,
$\kappa(\Phi_\A(G_K),\Phi_\A(G_K))^k$
is an open subgroup of $H^{\sc}(\A_f)$.  
\item[(2)]For all $k\ge 2$, $\kappa(\Phi_\A(G_K),\Phi_\A(G_K))^k$
contains an open subgroup
of $H^{\sc}(\A_f)$.
\end{itemize}
\end{conj}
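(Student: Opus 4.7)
The plan is to bootstrap the (conjectural) $\ell$-adic statement recalled just before Conjecture \ref{weak-MT} into an adelic one, using uniform commutator-width bounds for compact $\ell$-adic groups together with the almost linear-disjointness of the fields cut out by the $\Phi_\ell$.

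First, I would reduce to a statement prime by prime. Writing $H^{\sc}(\A_f)=\prod'_\ell H^{\sc}(\Q_\ell)$, a compact open subgroup has the form $\prod_{\ell\le N}U_\ell\times\prod_{\ell>N}H^{\sc}(\Z_\ell)$ for a fixed smooth integral model. By Serre's almost linear-disjointness (\cite{Serre-AI}, as invoked in the introduction), enlarging $N$ makes the projection $\Phi_\A(G_K)\to\prod_{\ell>N}\Phi_\ell(G_K)$ surjective onto the direct product. Hence it suffices to show, with a \emph{uniform} $k$: for almost all $\ell$, $\kappa_\ell(\Phi_\ell(G_K),\Phi_\ell(G_K))^k$ equals $H^{\sc}(\Z_\ell)$ and, for every $k\ge 2$, contains it; and at each remaining $\ell\le N$, the corresponding $k$-fold product equals (resp.\ contains) an open subgroup of $H^{\sc}(\Q_\ell)$.

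Second, for the almost-all step I would exploit the hypothesis that $\kappa_\ell(\Phi_\ell(G_K),\Phi_\ell(G_K))$ topologically generates the hyperspecial $H^{\sc}(\Z_\ell)$, together with uniform commutator-width bounds. Since $H^{\sc}$ has fixed rank, its reductions modulo $\ell$ are products of finite quasi-simple groups of Lie type of bounded rank, for which Liebeck--Shalev and Larsen--Shalev--Tiep yield commutator width bounded by an absolute constant $c$. A Hensel/congruence-filtration argument (on whose graded pieces the adjoint action of a generating set is surjective for $\ell\gg 1$) upgrades this to a uniform bound for $\kappa_\ell$: every element of $H^{\sc}(\Z_\ell)$ is a product of at most $c$ values of $\kappa_\ell$ on $\Phi_\ell(G_K)\times\Phi_\ell(G_K)$, giving part~(1) at almost all $\ell$ with $k=c$. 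For part~(2), Borel's dominance theorem for the commutator word on semisimple algebraic groups, combined with the $\ell$-adic implicit function theorem, shows that $\kappa_\ell(\cdot,\cdot)^2$ already contains an open neighbourhood of the identity in $H^{\sc}(\Q_\ell)$, and products thereof exhaust $H^{\sc}(\Z_\ell)$. At the finitely many remaining primes, part~(1) follows from the Nikolov--Segal theorem that every finitely generated compact $p$-adic analytic group has finite verbal width on its derived subgroup, while part~(2) again follows from Borel dominance plus Hensel lifting. Taking $k$ to be the maximum of the exponents produced at $\ell\le N$ and the universal $c$ from the previous step yields a single $k$ valid adelically.

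The principal obstacle is the uniformity in $\ell$ of the word-width bound on $H^{\sc}(\Z_\ell)$. The finite-group input is uniform by the Lie-type results cited, but its lift along the congruence filtration is delicate when $\ell$ is small relative to $\rk H^{\sc}$, where the exponential map and the adjoint action on the Lie algebra behave badly; in practice this forces one to enlarge $N$ to exclude a controlled finite set of small primes and absorb them into the local step. A secondary obstacle is the upgrade from ``topologically generates'' in Conjecture~\ref{weak-MT} to the set-theoretic equality demanded by Conjecture~\ref{MT}(1), which requires exhibiting in the Galois image a pair of elements at which the differential of $\kappa_\ell$ is surjective; Bogomolov-type Zariski-density of $\Phi_\ell(G_K)$ in $H^{\sc}(\Q_\ell)$ should be enough to provide this, but making the argument effective in $\ell$ is where most of the technical work lies.
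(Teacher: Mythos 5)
The statement you are addressing is a \emph{conjecture}: the paper does not prove it, and indeed its stated purpose is to isolate the conditional ingredients on which such a proof would rest. What you have written is a sketch of a conditional argument, and as such it tracks the paper's strategy fairly closely, but that strategy is deliberately split: Theorem~\ref{Generation} shows that two hypotheses (Serre's almost-linear-disjointness, giving condition~(i), and Larsen's conjecture that $\Gamma_\ell^{\sc}$ is hyperspecial for $\ell\gg1$, giving condition~(ii)) imply Conjecture~\ref{main}, and Proposition~\ref{coro} shows that Mumford--Tate together with Conjecture~\ref{main} implies Conjecture~\ref{MT}.

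Within that conditional frame your technical steps match the paper's in outline. Your prime-by-prime reduction via almost-linear-disjointness is hypothesis~(i) of Theorem~\ref{Generation}. Your appeal to the ``conjectural $\ell$-adic statement'' that $\kappa_\ell$ generates the hyperspecial $H^{\sc}(\Z_\ell)$ is exactly Larsen's conjecture (Remark~\ref{rem-con}), hypothesis~(ii). Your uniform commutator-width step at almost all $\ell$ corresponds to Theorem~\ref{Avni}: the paper proves that for $\ell>\max(5,\dim G_\ell)$ every element of a hyperspecial maximal compact subgroup is a product of two commutators, using as finite-group input \cite{LOST} (every element of $\cG(\F_\ell)$ is a \emph{single} commutator for $\ell\geq5$) rather than the width bounds you cite, and performing the Hensel lift at one well-chosen pair $(\bar x,\bar y)$ generating each quasisimple factor---which is what makes the smoothness condition~(\ref{smooth}) hold---rather than along the entire congruence filtration. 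At the finitely many small primes the paper invokes Jaikin-Zapirain (Proposition~\ref{3.1}) rather than Nikolov--Segal, but the role is the same, and Corollary~\ref{goto} supplies exactly the Borel-dominance plus $\ell$-adic-implicit-function-theorem argument you describe for part~(2).

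Where your proposal diverges is conceptual. The paper's whole point is \emph{not} to prove Conjecture~\ref{MT} directly, but to formulate Conjecture~\ref{main}, which makes sense without Mumford--Tate, and then prove the implication Mumford--Tate $+$ Conjecture~\ref{main} $\Rightarrow$ Conjecture~\ref{MT}. Your sketch conflates the two dependencies back together. Also, the ``principal obstacle'' you flag---uniformity in $\ell$ of the word-width bound---is not where the difficulty lies: the paper resolves it cleanly in Theorem~\ref{Avni} with the explicit threshold $\ell>\max(5,\dim G_\ell)$. The genuine open issue is the local hypothesis you take as given, namely Larsen's conjecture, which the paper can verify only in the type-A cases treated in \S4.
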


Note that  this conjecture is somewhat weaker than Serre's result in the special case that $X$ is a non-CM elliptic curve; it implies only that
$\Phi_{\A}(G_K)$ contains an open subgroup of $\SL_2(\hat \Z)$.  The fact that the image is actually open in $\GL_2(\hat \Z)$
depends on the special circumstance that the determinant in this case is the cyclotomic character rather than some power thereof.
Conjecture~\ref{MT} is formulated so as to avoid consideration of the multiplicative part of the Galois image.

This conjecture makes sense only if the Mumford-Tate conjecture is true.  
Our goal in this paper is to state, and in some 
cases prove, a weaker conjecture which does not require Mumford-Tate but which, together with Mumford-Tate,  implies
Conjecture~\ref{MT}.  

\begin{conj}
\label{main}With notations as above, we have:
\begin{itemize}
\item[(1)]
For some $k$,  $\kappa(\Phi_\A(G_K),\Phi_\A(G_K))^k$ is a \emph{special adelic} subgroup of $\prod_\ell G_\ell^{\sc}(\Q_\ell)$.
\item[(2)]
For all $k\ge 2$, $\kappa(\Phi_\A(G_K),\Phi_\A(G_K))^k$ contains such a subgroup.
\end{itemize}
\end{conj}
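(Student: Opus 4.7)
The plan is to reduce the conjecture to a local statement at each prime $\ell$ and then globalize using Serre's almost linear disjointness of the fields $K_\ell := \bar K^{\ker \Phi_\ell}$. The notion of \emph{special adelic} subgroup of $\prod_\ell G_\ell^{\sc}(\Q_\ell)$ is designed to encode exactly what such an assembly produces: a hyperspecial maximal compact at almost all primes, up to bounded index, together with a controlled defect at the remaining finite set of primes coming from the residual linear entanglement of the $K_\ell$.

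At each $\ell$, I would use the Bogomolov--Serre theorem together with the Larsen--Pink classification of compact subgroups of $p$-adic algebraic groups. These imply that $\Phi_\ell(G_K)$ is open in $G_\ell(\Q_\ell)$, and that for all but finitely many $\ell$ the derived part of $G_\ell$ extends to a semisimple $\Z_\ell$-group scheme with $\Phi_\ell(G_K)\cap G_\ell^{\der}(\Q_\ell)$ hyperspecial up to bounded index. The next local input is the finite commutator width of a compact simply connected $p$-adic analytic group (Jaikin-Zapirain in general, with stronger forms due to Larsen--Shalev and Avni--Gelander--Kassabov--Shalev), which gives that $\kappa_\ell(\Phi_\ell(G_K),\Phi_\ell(G_K))^k$ contains an open subgroup of $G_\ell^{\sc}(\Q_\ell)$ for some $k$, and that for $\ell\gg 1$ even $k=1$ already fills a full hyperspecial maximal compact. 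Part (2) of the conjecture for any $k\geq 2$ then follows from the trivial inclusion $A^k\supseteq A$ when $1\in A$, combined with the same width bound.

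Globalization proceeds by Serre's theorem: there is a finite set $S$ of rational primes such that for $\ell\notin S$ the field $K_\ell$ is linearly disjoint over a fixed finite extension of $K$ from the compositum of the remaining $K_{\ell'}$. This forces $\Phi_\A(G_K)$ to project onto, and be of bounded index in, the restricted product of the $\Phi_\ell(G_K)$; applying $\kappa$ componentwise then yields a subgroup of $\prod_\ell G_\ell^{\sc}(\Q_\ell)$ of precisely the special adelic type demanded by the conjecture.

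The main obstacle is uniformity across $\ell$, both in the commutator-width bounds and in the integral structure of $G_\ell^{\sc}$. The Larsen--Pink type theorem controls the family $(G_\ell^{\sc})_\ell$ enough to produce uniform $p$-adic word-map estimates at large primes, but \emph{without} the Mumford--Tate conjecture one cannot appeal to a single $\Q$-group, so every uniform statement must be phrased intrinsically in terms of the sequence $(G_\ell)_\ell$. Matching the output of this local analysis to the precise definition of a special adelic subgroup, and in particular controlling the bounded defect at the finitely many "bad" primes where integral models, hyperspeciality, or linear disjointness fail, is where the real work will lie.
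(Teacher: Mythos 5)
The statement you are proving is labeled a conjecture, and the paper does not prove it unconditionally: it establishes it only under additional hypotheses (Theorem~\ref{Generation}, whose hypothesis~(ii) is Larsen's maximality conjecture as explained in Remark~\ref{rem-con}), and unconditionally only for type~A representations (Theorem~4.1, via \cite{HL}). Your proposal asserts as a known input precisely the open step. You write that ``for all but finitely many $\ell$ the derived part of $G_\ell$ extends to a semisimple $\Z_\ell$-group scheme with $\Phi_\ell(G_K)\cap G_\ell^{\der}(\Q_\ell)$ hyperspecial up to bounded index,'' attributing this to a Larsen--Pink-type theorem. But the Larsen--Pink independence results control the family of \emph{algebraic} groups $(G_\ell)_\ell$ (common root datum, forms over $\Q$ for $\ell\gg1$, etc.); they say nothing about whether the actual compact image $\Phi_\ell(G_K)^{\sc}$ inside $G_\ell^{\sc}(\Q_\ell)$ is hyperspecial. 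That assertion is Larsen's conjecture \cite{Larsen}, the exact content of condition~(ii) in Theorem~\ref{Generation}, and is the entire difficulty. Nothing in Bogomolov--Serre openness at a fixed $\ell$, nor in the word-width results of Jaikin-Zapirain or Avni--Gelander--Kassabov--Shalev, supplies uniformity across $\ell$ at the integral level.

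There is a second, more elementary, problem with the phrase ``hyperspecial up to bounded index.'' Even if one knew that $\Phi_\ell(G_K)^{\sc}$ has index $d_\ell\le D$ in some hyperspecial maximal compact $\Gamma_\ell^{\sc}$ for all $\ell\notin S$, this would not yield a special adelic subgroup: if $d_\ell>1$ for infinitely many $\ell$, the group $\prod_\ell \Phi_\ell(G_K)^{\sc}$ has infinite index in $\prod_\ell\Gamma_\ell^{\sc}$ and so is not commensurable to it (compare Proposition~\ref{Adelic-criterion}, which forces equality to the maximal compact for almost all $\ell$). You genuinely need hyperspeciality on the nose for $\ell\gg1$, not boundedness of the defect. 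Finally, your remark that ``even $k=1$ already fills a full hyperspecial maximal compact'' for $\ell\gg1$ overstates what is available: the paper's Theorem~\ref{Avni} (following \cite{LOST} and a Hensel lemma argument) gets $\cG(\Z_\ell)$ from \emph{products of two} commutators, not from single commutators; the $k=1$ claim would amount to an integral Ore theorem that is not established here.
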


In \S2, we explain what it means for a compact subgroup of a product of $\ell$-adic groups to be \emph{adelic} or \emph{special adelic}.
In \S3, we give a criterion for a compact subgroup $\Gamma\subset  \prod_\ell G_\ell^{\sc}(\Q_\ell)$ to be special adelic.
We also develop a theory, along the lines of \cite{AGKS}, which allows us to show that a product of two factors is enough to 
generate a set containing an adelic subgroup. In \S4, we show that Conjecture ~\ref{main} holds for type A representations (in the sense of \cite{HL}).
The last section clarifies the connections between our conjectures and Serre's conjectures on maximal motives.

We would like to thank Serre for a suggestion that led to this paper.

\section{Adelic subgroups}

In this section, we consider a system of simply connected semisimple algebraic groups $G_\ell/\Q_\ell$, one for
each rational prime $\ell$.  Let $G_{\A} := \prod_\ell G_\ell(\Q_\ell)$ with the natural product topology, and let $\Gamma_{\A}\subset G_{\A}$  be a compact subgroup.
We say that $\Gamma_{\A}$ is \emph{adelic} if and only if it is commensurable to $\prod_\ell \Gamma_\ell$ for some
system $\Gamma_\ell$ of maximal compact subgroups of $G_\ell(\Q_\ell)$.
If, in addition, we can choose $\Gamma_\ell$ to be hyperspecial for all $\ell$ sufficiently large, we say that $\Gamma_{\A}$ is
\emph{special adelic}. 
Denote the set of rational primes by $\mathcal{L}$. For any subset $S\subset \cL$ of $\cL$, we denote by $\pr_S$ the projection map
$$\prod_{\ell\in \cL}G_\ell(\Q_\ell)\to\prod_{\ell\in S}G_\ell(\Q_\ell).$$
If $S = \{\ell\}$ we write $\pr_\ell$ for $\pr_S$.

\begin{lemma}\label{Goursat}
(Goursat's lemma)
Let $G$ and $G'$ be profinite groups and $H$ a closed subgroup of $G\times G'$.
Let $N'$ (resp. $N$) be the kernel of the projection $p_1\colon H\to G$ (resp. $p_2:H\to G'$).
If $p_1$ and $p_2$ are surjective, then $H$ embeds as a graph of $G/N\times G'/N'$
which induces a bicontinuous isomorphism $G/N\cong G'/N'$.
\end{lemma}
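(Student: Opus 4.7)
The plan is to reduce to the classical (abstract) Goursat lemma and then upgrade the resulting isomorphism to a bicontinuous one using compactness.

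First, I would identify $N$ with the closed normal subgroup $\{g\in G:(g,1)\in H\}\subseteq G$ and $N'$ with $\{g'\in G':(1,g')\in H\}\subseteq G'$. Closedness follows from continuity of the embeddings $g\mapsto(g,1)$ and $g'\mapsto(1,g')$ together with closedness of $H$. Normality of $N$ in $G$ is checked using surjectivity of $p_2$: given $g\in G$, pick $g'$ with $(g,g')\in H$, and conjugate $(n,1)\in H$ by $(g,g')$ to obtain $(gng^{-1},1)\in H$; and symmetrically for $N'$.

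Second, I would invoke the purely algebraic version of Goursat's lemma to produce the group isomorphism $\bar\phi\colon G/N\to G'/N'$. Concretely, the fiber $p_1^{-1}(g)\subseteq H$ is a coset of $N'$, so the assignment $g\mapsto g'N'$ for any $g'$ with $(g,g')\in H$ is a well-defined group homomorphism $G\to G'/N'$ with kernel exactly $N$; surjectivity of $p_2$ makes it surjective. This step is formal and makes no use of topology.

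Third, I would upgrade $\bar\phi$ to a homeomorphism. The groups $G/N$ and $G'/N'$ are profinite, hence compact Hausdorff. Let $q\colon G\times G'\to G/N\times G'/N'$ denote the product of the quotient maps. Then $q(H)$ is compact, therefore closed, and by construction it is precisely the graph of $\bar\phi$. Now I would apply the standard fact that a map into a compact Hausdorff space with closed graph is continuous: for $C\subseteq G'/N'$ closed, $\bar\phi^{-1}(C)$ equals the projection of $q(H)\cap((G/N)\times C)$ to $G/N$, and the projection $(G/N)\times(G'/N')\to G/N$ is closed because $G'/N'$ is compact. The same argument applied with the roles reversed gives continuity of $\bar\phi^{-1}$.

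There is no real obstacle here; the only thing to be careful about is that closed subgroups, kernels, and quotients behave as expected in the profinite category, and that the closed-graph-implies-continuous argument is invoked correctly (which requires the target to be compact Hausdorff, exactly what profiniteness provides).
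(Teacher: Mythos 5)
The paper states this lemma without proof, treating it as classical. Your argument is correct and complete: you correctly reduce to the purely algebraic Goursat lemma and then upgrade to bicontinuity via compactness of $q(H)$ together with the closed-graph criterion; note also that once continuity of $\bar\phi$ is established, bicontinuity follows automatically because a continuous bijection between compact Hausdorff spaces is a homeomorphism, so the symmetric argument for $\bar\phi^{-1}$ is not strictly needed.
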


\begin{lemma}
\label{One-to-finite}
Let $S$ be a finite set of primes and $\Gamma$ be a compact subgroup of $\prod_{\ell\in S}\Gamma_\ell$. If $\pr_\ell \Gamma$ is of finite index in $\Gamma_\ell$ for all $\ell\in S$, then $\Gamma$ is of finite index in $\prod_{\ell\in S}\Gamma_\ell$.
\end{lemma}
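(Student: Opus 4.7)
The plan is to argue by induction on $|S|$. The case $|S| = 1$ is tautological. For the inductive step, fix $\ell_0 \in S$ and set $S' := S \setminus \{\ell_0\}$ and $H := \prod_{\ell \in S'} \Gamma_\ell$. The image $\Gamma' := \pr_{S'}(\Gamma) \subset H$ satisfies $\pr_\ell(\Gamma') = \pr_\ell(\Gamma)$ for each $\ell \in S'$, so by the inductive hypothesis $[H : \Gamma'] < \infty$. Combined with the finite index $[\Gamma_{\ell_0} : \pr_{\ell_0}(\Gamma)]$, it suffices to show that $\Gamma$ has finite index in $\pr_{\ell_0}(\Gamma) \times \Gamma'$, which reduces the whole problem to the two-factor situation.

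Write $A := \pr_{\ell_0}(\Gamma)$ and $B := \Gamma'$, and view $\Gamma$ as a closed subgroup of $A \times B$. Both coordinate projections from $\Gamma$ are then surjective, so Goursat's lemma (Lemma \ref{Goursat}) identifies $\Gamma$ with the graph of a bicontinuous isomorphism
$$
Q \;:=\; A/N \;\xrightarrow{\ \sim\ }\; B/N',
$$
and $[A \times B : \Gamma] = |Q|$. The proof will be complete once we verify that the common quotient $Q$ is finite.

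For this, I would use that $A$, being an open subgroup of $\Gamma_{\ell_0} \subset G_{\ell_0}(\Q_{\ell_0})$, is a compact $\ell_0$-adic Lie group, hence virtually pro-$\ell_0$; its image in $Q$ therefore contains an open pro-$\ell_0$ subgroup $U \subseteq Q$. On the other hand, $B$, being of finite index in $\prod_{\ell \in S'} \Gamma_\ell$, contains an open subgroup of the form $\prod_{\ell \in S'} P_\ell$ with each $P_\ell$ pro-$\ell$; its image gives an open subgroup $U' \subseteq Q$ whose finite continuous quotients have orders divisible only by primes in $S'$. The intersection $U \cap U'$ is open in $Q$; as a closed subgroup of $U$ it is pro-$\ell_0$, and each of its finite discrete quotients is also a quotient of $U'$, hence supported on $S'$. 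Since $\ell_0 \notin S'$, every finite quotient of $U \cap U'$ is trivial, so $U \cap U' = \{1\}$. A compact topological group containing a trivial open subgroup is finite, so $|Q| < \infty$.

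The main obstacle is precisely this last step: showing that a common topological quotient of an $\ell_0$-adic Lie group and a (virtual) product of pro-$\ell$ groups for $\ell \neq \ell_0$ must be finite. The surrounding reduction and application of Goursat are formal; the real content lies in this ``$\ell$-adic incompatibility'' argument, which rests on the fact that a profinite group that is simultaneously pro-$\ell$ and pro-$\ell'$ for distinct primes $\ell,\ell'$ is trivial.
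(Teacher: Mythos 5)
Your proof is correct and shares the outer scaffolding of the paper's argument — induction on $|S|$, reduction to a two-factor situation, and Goursat's lemma — but the crucial finiteness step is settled by a genuinely different device. The paper shows the common Goursat quotient is finite by producing a continuous surjection from a compact $\ell_{k+1}$-adic Lie group onto an $\ell_i$-adic Lie group with $\ell_i\neq\ell_{k+1}$, and then invoking Serre's theorem (\emph{Abelian $\ell$-adic representations}, \S3.2.2, Prop.\ 1a) that such a homomorphism is locally constant, together with Lazard's theorem that the relevant quotients are again $p$-adic analytic. You instead work purely inside the category of profinite groups: you exhibit in $Q$ an open pro-$\ell_0$ subgroup $U$ (coming from $A$) and an open subgroup $U'$ that is pro-$S'$ (coming from $B$), and conclude $U\cap U'=\{1\}$ because no nontrivial profinite group can be simultaneously pro-$\ell_0$ and pro-$\ell$ for $\ell\neq\ell_0$. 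This sidesteps $p$-adic Lie theory entirely (you only need the standing fact that compact subgroups of $G_\ell(\Q_\ell)$ are virtually pro-$\ell$), so your route is somewhat more elementary; what the paper's route buys is a slightly shorter appeal to a single citable result.

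One small inaccuracy in your write-up: you assert that each finite quotient of $U\cap U'$ ``is also a quotient of $U'$,'' which is false for a mere closed subgroup. What you actually need, and what is true, is that $U\cap U'$ is a \emph{closed subgroup} of the pro-$S'$ group $U'$, and closed subgroups of pro-$S'$ groups are again pro-$S'$ (every finite continuous quotient of a closed subgroup embeds into a finite continuous quotient of the ambient group, so its order is still $S'$-supported). With that phrasing corrected, the argument is sound as stated.
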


\begin{proof} The lemma is trivial if $|S|=1$. 
Assume the statement is true when $|S|=k$.
Let $S=\{\ell_1,...,\ell_{k+1}\}$.
By the induction hypothesis, for each $1\leq i\leq k$ there exists an open subgroup $\Gamma_{\ell_i}'$
of $\Gamma_{\ell_i}$ such that the projection of $\Gamma$ to $\prod_{i=1}^k\Gamma_{\ell_i}$ contains $\prod_{i=1}^k\Gamma_{\ell_i}'$.
Let
$$\Gamma' := \Gamma\cap \bigl((\Gamma'_{\ell_1}\times\cdots\times  \Gamma'_{\ell_k})\times \Gamma_{\ell_{k+1}}\bigr).$$
Then $\Gamma'$ is of finite index in $\Gamma$ and therefore maps onto a finite index subgroup $\Gamma'_{\ell_{k+1}}$ of $\Gamma_{\ell_{k+1}}$.
Replacing $\Gamma$ with $\Gamma'$ and $\Gamma_{\ell_i}$ with $\Gamma'_{\ell_i}$,
we may assume $\Gamma$ surjects onto $\prod_{i=1}^k\Gamma_{\ell_i}$ and also onto $\Gamma_{\ell_{k+1}}$.
By Goursat's lemma, we can find normal closed subgroups $N\cong\ker(\Gamma\rightarrow \Gamma_{\ell_{k+1}})$ and $N'\cong\ker(\Gamma\rightarrow \prod_{i=1}^k\Gamma_{\ell_i})$ respectively of $\prod_{i=1}^k\Gamma_{\ell_i}$ and $\Gamma_{\ell_{k+1}}$ such that 
\begin{equation}\label{Go-eqt}
(\prod_{i=1}^k\Gamma_{\ell_i})/N\cong \Gamma_{\ell_{k+1}}/N'
\end{equation}
is a bicontinuous isomorphism. Let $N_{\ell_i}$ be the projection of $N$ to $\Gamma_{\ell_i}$ for $1\leq i\leq k$. The isomorphism
(\ref{Go-eqt}) induces a continuous surjective map 
$$\Gamma_{\ell_{k+1}}/N' \rightarrow \Gamma_{\ell_i}/N_{\ell_i}$$
for $1\leq i\leq k$. Since a continuous homomorphism from a $\ell$-adic Lie group to a $p$-adic Lie group is locally constant if $p\neq \ell$ 
 \cite[$\mathsection 3.2.2.$ Proposition 1a]{Serre-ALR} and $\Gamma_{\ell_{k+1}}/N'$ is compact, it follows that $\Gamma_{\ell_i}/N_{\ell_i}$ is finite for $i\leq i\leq k$. 
By the  induction hypothesis, $N$ is of finite index in $(\prod_{i=1}^k\Gamma_{\ell_i})$. Hence, $\Gamma_{\ell_{k+1}}/N'$ 
is finite by (\ref{Go-eqt}) and the inclusions
$$N\times N'\subset \Gamma\subset \prod_{i=1}^{k+1}\Gamma_{\ell_i}$$ are of finite index. The lemma follows by induction.
\end{proof}

\begin{lemma}\label{compare}
Let $\Gamma_\ell$ be a compact open subgroup of $G_\ell(\Q_\ell)$ for all $\ell$ and $\Gamma\subset\prod_\ell\Gamma_\ell$
a compact subgroup. Let $S$ be a finite subset of $\mathcal{L}$ such that 
$$\pr_S(\Gamma)=\prod_{\ell\in S}\Gamma_\ell\hspace{.2in} \mathrm{and}\hspace{.2in} \pr_{\mathcal{L}\backslash S}(\Gamma)=\prod_{\ell\in \mathcal{L}\backslash S}\Gamma_\ell=\prod_{\ell\notin  S}\Gamma_\ell.$$
Then $\Gamma$ is of finite index in $\prod_\ell\Gamma_\ell$.
\end{lemma}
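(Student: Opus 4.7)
The plan is to apply Goursat twice and reduce the problem to showing that a continuous map from an open subgroup of $B$ into a torsion-free target is trivial.

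Set $A := \prod_{\ell\in S}\Gamma_\ell$ and $B := \prod_{\ell\notin S}\Gamma_\ell$, so by hypothesis $\Gamma\subset A\times B$ surjects onto each factor. Applying Lemma \ref{Goursat} identifies $\Gamma$ with the graph of a bicontinuous isomorphism $A/N\cong B/N'$, where $N:=\{a\in A:(a,e)\in\Gamma\}$ and $N':=\{b\in B:(e,b)\in\Gamma\}$. Since $\Gamma$ contains $N\times N'$, it suffices to show $A/N$ is finite, and Lemma \ref{One-to-finite} applied to $N$ inside the finite product $A$ reduces this further to showing that $\pr_{\ell_0}(N)$ has finite index in $\Gamma_{\ell_0}$ for every $\ell_0\in S$.

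Fix $\ell_0\in S$ and set $M:=\pr_{\ell_0}(N)$. Projecting $\Gamma$ to $\Gamma_{\ell_0}\times B$ and invoking Goursat once more yields a bicontinuous isomorphism $\Gamma_{\ell_0}/M\cong B/M'$ for some closed normal subgroup $M'\subset B$. The quotient $\Gamma_{\ell_0}/M$ is a compact $\ell_0$-adic Lie group, so it contains a torsion-free open subgroup $U$. Let $\phi:B\to\Gamma_{\ell_0}/M$ be the continuous surjection coming from Goursat. By continuity, $\phi^{-1}(U)$ is open in $B$, hence contains a basic open set
\[ V_0 \;=\; \prod_{p\in T} U_p \;\times\; \prod_{p\in\cL\setminus(S\cup T)}\Gamma_p \]
for some finite $T\subset\cL\setminus S$ and some open $U_p\subset\Gamma_p$.

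The crucial step is to show $\phi|_{V_0}$ is trivial. For each prime $p$ appearing in the product defining $V_0$ (so $p\neq\ell_0$ since $\ell_0\in S$), the corresponding factor of $V_0$ is a $p$-adic Lie group, and the restriction of $\phi$ to it is a continuous homomorphism into the $\ell_0$-adic Lie group $\Gamma_{\ell_0}/M$; by the result of Serre quoted in the proof of Lemma \ref{One-to-finite} this map is locally constant and has finite image. That finite image lies in the torsion-free group $U$, so it must be trivial. Since the subgroup of finite-support tuples is dense in the profinite product $V_0$ and $\phi$ vanishes on it, continuity forces $\phi|_{V_0}\equiv e$, whence $V_0\subset M'$. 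As $V_0$ has finite index in $B$, we conclude $[B:M']<\infty$, so $\Gamma_{\ell_0}/M$ is finite; tracing back through the reductions then gives $[\prod_\ell\Gamma_\ell:\Gamma]<\infty$. I expect the density/continuity step to be the main technical point: coordinate-wise vanishing into $U$ alone is not enough without both the torsion-freeness of $U$ (to promote ``finite image'' to ``trivial image'') and the density of finite-support tuples in the profinite product $V_0$.
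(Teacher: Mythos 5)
Your proof is correct and follows essentially the same strategy as the paper: after Goursat reduces the problem to showing $A/N$ is finite, you use Lazard's result to extract a torsion-free open subgroup of the $\ell_0$-adic quotient, Serre's result that continuous homomorphisms between $p$- and $\ell$-adic Lie groups for $p\neq\ell$ are locally constant, and density of finite-support tuples in the profinite product $V_0$. The one structural difference is cosmetic: you invoke Goursat a second time to isolate a single $\ell_0\in S$ (and correctly identify the resulting kernel as $M=\pr_{\ell_0}(N)$, since elements of $\Gamma$ mapping to $(a,e)\in\Gamma_{\ell_0}\times B$ necessarily have trivial $\cL\setminus S$-component, forcing their $S$-restriction into $N$), whereas the paper applies Goursat once, then uses Lemma~\ref{One-to-finite} to pass from $N_S$ to $\prod_{\ell\in S}N_\ell$ and considers the surjection $B\to\prod_{\ell\in S}(\Gamma_\ell/N_\ell)$ simultaneously for all $\ell\in S$. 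Both routes converge at the same core argument; yours handles one coordinate at a time, the paper's handles them in parallel, and neither buys anything substantive over the other.
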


\begin{proof}
By Goursat's lemma, there exist closed normal subgroups $N_S\subset \prod_{\ell\in S}\Gamma_\ell$ and 
$N^S\subset \prod_{\ell\notin  S}\Gamma_\ell$ such that 
\begin{equation}\label{Go-eqt2}
(\prod_{\ell\in S}\Gamma_\ell)/N_S\cong (\prod_{\ell\notin  S}\Gamma_\ell)/N^S
\end{equation}
and 
$$N_S\times N^S\subset\Gamma\subset\prod_\ell\Gamma_\ell.$$
It suffices to show $N_S\times N^S$ is of finite index in $\prod_\ell\Gamma_\ell$ or, equivalently, that both sides of the isomorphism (\ref{Go-eqt2})
are finite groups.
Again, it suffices to prove that the surjection
$$\prod_{\ell\notin S}\Gamma_\ell\to (\prod_{\ell\in S}\Gamma_\ell)/N_S$$
has finite image.

If $N_\ell$ denotes the projection to $\Gamma_\ell$ of $N_S$,
by Lemma~\ref{One-to-finite}, $N_S$ is of finite index in $\prod_{\ell\in S} N_\ell$, so it suffices to prove the image of
$$\pi\colon \prod_{\ell\notin S}\Gamma_\ell\to \prod_{\ell\in S}(\Gamma_\ell/N_\ell)$$
is finite.  Now, $\Gamma_\ell$ is $\ell$-adic analytic by definition, and it follows \cite[(3.2.3.5)]{Lazard} that $\Gamma_\ell/N_\ell$ is $\ell$-adic analytic for
all $\ell\in S$.  Therefore, each $\Gamma_\ell/N_\ell$ contains an open subgroup $U_\ell$ which is an $\ell$-valued pro-$\ell$ group and hence torsion-free \cite[(3.1.3)]{Lazard}.
It suffices to prove finiteness of the image after 
replacing   $\prod_{\ell\in S}(\Gamma_\ell/N_\ell)$ by $\prod_{\ell\in S}U_\ell$ and $\prod_{\ell\not\in S}\Gamma_\ell$ by $\pi^{-1}\prod_{\ell\in S}U_\ell$.

As $\pi^{-1}\prod_{\ell\in S}U_\ell$ is an open subgroup of $\prod_{\ell\not\in S}\Gamma_\ell$, it contains a subgroup of the form $\prod_{\ell\not\in S}V_\ell$,
where $V_\ell$ is an open subgroup of $\Gamma_\ell$ for all $\ell\not\in S$ and is equal to $\Gamma_\ell$ for all but finitely many $\ell$.
It suffices to prove that the image of $\prod_{\ell\not\in S}V_\ell$ in $\prod_{\ell\in S}U_\ell$ is finite.
Each factor $V_\ell$, $\ell\not\in S$ is virtually pro-$\ell$, so its image in any $U_\ell$, $\ell\in S$, is finite.  Since $U_\ell$ is torsion-free, each such image is trivial,
and it follows that the image of $\prod_{\ell\notin S}V_\ell$ in $\prod_{\ell\in S}U_\ell$ is trivial.
\end{proof}

Here are some properties for adelic/special adelic subgroups.

\begin{prop}
\label{Adelic-criterion}
A compact subgroup $\Gamma_{\A}\subset G_{\A}$ is adelic if and only if the following two conditions hold.
\begin{itemize}
\item[(i)] For each $\ell$ the projection of $\Gamma_{\A}$ to $G_\ell(\Q_\ell)$ has open image.
\item[(ii)] There exists a finite set $S$ of primes and for each $\ell\not\in S$ a maximal compact subgroup $\Gamma_\ell\subset G_\ell(\Q_\ell)$ such that
$$\prod_{\ell\not\in S} \Gamma_\ell \subset \pr_{\mathcal{L}\backslash S}(\Gamma_{\A}).$$
\end{itemize}
It is special adelic if and only if $\Gamma_\ell$ can be chosen hyperspecial for all $\ell\not\in S$.
\end{prop}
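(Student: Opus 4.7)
My plan is to prove the two directions of the equivalence separately; the forward (necessity) implication is essentially topological, while the backward (sufficiency) implication requires the real work.

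For necessity, I will assume $\Gamma_{\A}$ is commensurable with $\prod_{\ell}\Gamma_\ell$ for a system of maximal compact subgroups. The intersection $\Gamma_{\A}\cap\prod_{\ell}\Gamma_\ell$ is closed and of finite index in the compact product, hence open, so it must contain a basic neighborhood of the identity of the form $\prod_{\ell\in S'}V_\ell\times\prod_{\ell\notin S'}\Gamma_\ell$ with $V_\ell\subset\Gamma_\ell$ open and $S'$ finite. Projecting to each individual coordinate will yield (i), and projecting to the complement of $S:=S'$ will yield (ii). For the special adelic strengthening, I will enlarge $S$ to include the finitely many primes at which $\Gamma_\ell$ is not hyperspecial.

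For sufficiency, suppose (i) and (ii) hold. For each $\ell\notin S$, I will project the inclusion in (ii) to a single coordinate to obtain $\Gamma_\ell\subset\pr_\ell(\Gamma_{\A})$; combined with compactness of $\pr_\ell(\Gamma_{\A})$ and maximality of $\Gamma_\ell$, this forces $\pr_\ell(\Gamma_{\A})=\Gamma_\ell$. For $\ell\in S$, condition (i) together with Bruhat--Tits theory provides a maximal compact subgroup $\Gamma_\ell^{\max}\subset G_\ell(\Q_\ell)$ containing the compact open $\pr_\ell(\Gamma_{\A})$. Setting $\Gamma_\ell':=\Gamma_\ell$ for $\ell\notin S$ and $\Gamma_\ell':=\Gamma_\ell^{\max}$ for $\ell\in S$, I will have $\Gamma_{\A}\subset\prod_\ell\Gamma_\ell'$ with $\pr_{\cL\setminus S}(\Gamma_{\A})=\prod_{\ell\notin S}\Gamma_\ell'$, while Lemma~\ref{One-to-finite} applied to (i) will give $\pr_S(\Gamma_{\A})$ of finite index in $\prod_{\ell\in S}\Gamma_\ell'$.

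The hard part will be deducing $[\prod_\ell\Gamma_\ell':\Gamma_{\A}]<\infty$ from this data, since Lemma~\ref{compare} does not apply verbatim: the $S$-projection is only finite-index rather than surjective. I expect to adapt its proof directly. Goursat's lemma~\ref{Goursat} produces closed normal subgroups $K_S\subset\pr_S(\Gamma_{\A})$ and $K^S\subset\prod_{\ell\notin S}\Gamma_\ell'$ with $K_S\times K^S\subset\Gamma_{\A}$ and a bicontinuous isomorphism $\pr_S(\Gamma_{\A})/K_S\cong\prod_{\ell\notin S}\Gamma_\ell'/K^S$; the locally-constant argument from the proof of Lemma~\ref{compare} (continuous $\ell$-adic to $p$-adic homomorphisms are locally constant when $p\ne\ell$, and torsion-free open pro-$p$ subgroups of the target force triviality) will show both sides of this Goursat isomorphism are finite. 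Combining this finiteness with the finite index of $\pr_S(\Gamma_{\A})$ in $\prod_{\ell\in S}\Gamma_\ell'$ will complete the proof, and the special adelic strengthening follows immediately since the $\Gamma_\ell$ from (ii) are hyperspecial for $\ell\notin S$ by assumption.
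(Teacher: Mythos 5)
Your argument is correct and follows essentially the same route as the paper: the necessity direction is a direct unwinding of commensurability plus the observation that a finite-index closed subgroup of a compact group is open, and the sufficiency direction reduces to a finite-index statement via Lemma~\ref{One-to-finite} together with a Goursat argument of the type used in Lemma~\ref{compare}.

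You are right to flag that Lemma~\ref{compare} does not apply verbatim in the sufficiency direction: that lemma requires $\pr_S(\Gamma_{\A})$ to equal the full product over $S$, whereas condition~(i) only delivers a compact group whose coordinate projections are open, hence by Lemma~\ref{One-to-finite} a finite-index (not necessarily full) subgroup of $\prod_{\ell\in S}\Gamma'_\ell$. The paper's proof simply cites ``Lemma~\ref{One-to-finite} and Lemma~\ref{compare}'' without spelling out how they combine, implicitly absorbing this adaptation; your version makes it explicit by running Goursat on $\Gamma_{\A}\subset \pr_S(\Gamma_{\A})\times\prod_{\ell\notin S}\Gamma'_\ell$ and then repeating the torsion-free/locally-constant step from the proof of Lemma~\ref{compare}. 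When you write that up, remember that you still need to project the Goursat kernel $K_S$ to each coordinate $\ell\in S$ and invoke Lemma~\ref{One-to-finite} once more before the locally-constant argument applies, since $\pr_S(\Gamma_{\A})/K_S$ is not itself a single $\ell$-adic analytic group; your phrase ``adapt its proof directly'' covers this, but it is the step most worth writing out. One minor economy: for $\ell\in S$ you need not appeal to Bruhat--Tits to produce a maximal compact $\Gamma_\ell^{\max}$; taking $\Gamma'_\ell := \pr_\ell(\Gamma_{\A})$, which is compact and open by (i), works equally well and makes each coordinate projection surjective by construction.
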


\begin{proof} If $\Gamma_{\A}\subset G_{\A}$ is adelic, then $\Gamma_{\A}$ contains an open subgroup of $\prod_\ell \Gamma_\ell$. We may assume the open subgroup is of the form 
$$\prod_{\ell\in S}\Gamma_\ell'\times \prod_{\ell\notin S}\Gamma_\ell,$$
where $S$ is a finite set of primes and $\Gamma_\ell'$ is an open subgroup of $\Gamma_\ell$. This implies (i) and (ii). 

Conversely, suppose (i) and (ii) hold.
Since $\Gamma_{\A}$ is compact, it is a closed subgroup of $\prod_\ell\Gamma_\ell$ of finite index by Lemma \ref{One-to-finite} and Lemma \ref{compare}.

The proof for the special adelic case is similar.
\end{proof}

\begin{prop}
\label{Nested}
If $G^1_{\A}\subset G^2_{\A}$ are both adelic subgroups of $G_{\A}$, then $G^1_{\A}$ is of finite index in $G^2_{\A}$.
\end{prop}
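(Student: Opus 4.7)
The plan is to produce a single product $\prod_\ell \Gamma_\ell$ of compact open subgroups of the $G_\ell(\Q_\ell)$ in which both $G^1_{\A}$ and $G^2_{\A}$ sit as finite-index subgroups; elementary commensurability bookkeeping will then yield the conclusion. The essential task is to align the two adelic systems so that they use the same local maximal compact subgroup at almost every prime.

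First I would apply Proposition \ref{Adelic-criterion} to each $G^i_{\A}$ to obtain a finite set $T_i \subset \cL$ and maximal compact subgroups $\Gamma^i_\ell \subset G_\ell(\Q_\ell)$ for $\ell \notin T_i$ with $\prod_{\ell \notin T_i} \Gamma^i_\ell \subset \pr_{\cL \setminus T_i}(G^i_{\A})$. Projecting onto the $\ell$-th factor shows $\Gamma^i_\ell \subset \pr_\ell G^i_{\A}$, and since the latter is compact while $\Gamma^i_\ell$ is maximal compact, in fact $\pr_\ell G^i_{\A} = \Gamma^i_\ell$. Combining $G^1_{\A} \subset G^2_{\A}$ with the maximality of $\Gamma^1_\ell$ then forces $\Gamma^1_\ell = \Gamma^2_\ell$ for every $\ell \notin T := T_1 \cup T_2$. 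For $\ell \in T$, I would set $\Gamma^i_\ell := \pr_\ell G^i_{\A}$, which is compact open by Proposition \ref{Adelic-criterion}(i); the proof of that proposition already gives that $G^i_{\A}$ is a finite-index subgroup of $\prod_\ell \Gamma^i_\ell$.

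Next I would let $\Gamma_\ell := \Gamma^1_\ell \cap \Gamma^2_\ell$. This equals $\Gamma^i_\ell$ for $\ell \notin T$ and, at the finitely many $\ell \in T$, is of finite index in each $\Gamma^i_\ell$ (any two compact open subgroups of an $\ell$-adic Lie group are commensurable); hence $\prod_\ell \Gamma_\ell$ has finite index in each $\prod_\ell \Gamma^i_\ell$. Consequently $G^i_{\A} \cap \prod_\ell \Gamma_\ell$ is cut out in $\prod_\ell \Gamma^i_\ell$ by two finite-index subgroups and therefore has finite index in both $G^i_{\A}$ and $\prod_\ell \Gamma_\ell$. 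The inclusion $G^1_{\A} \cap \prod_\ell \Gamma_\ell \subset G^2_{\A} \cap \prod_\ell \Gamma_\ell$ is then a finite-index inclusion inside $\prod_\ell \Gamma_\ell$, and chaining along $G^1_{\A} \cap \prod_\ell \Gamma_\ell \subset G^1_{\A} \subset G^2_{\A}$ delivers $[G^2_{\A} : G^1_{\A}] < \infty$. The main obstacle is the maximality step aligning the two local systems at almost all primes; after that, the argument is just finite-index arithmetic.
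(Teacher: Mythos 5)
Your argument is correct and is essentially the reconstruction the paper leaves implicit (the paper's proof is a one-line citation of Proposition~\ref{Adelic-criterion}, Lemma~\ref{One-to-finite}, and compactness). You correctly identify the one nontrivial point: the inclusion $G^1_{\A}\subset G^2_{\A}$ together with maximality of the local factors forces the two systems of maximal compacts to coincide at almost every prime, after which the claim reduces to commensurability bookkeeping. The remaining steps — that $\pr_\ell G^i_{\A}$ is the maximal compact $\Gamma^i_\ell$ for $\ell$ outside the exceptional set, that compact open subgroups of $G_\ell(\Q_\ell)$ are pairwise commensurable, and that each $G^i_{\A}$ sits with finite index in $\prod_\ell \pr_\ell G^i_{\A}$ (the content of the converse direction of Proposition~\ref{Adelic-criterion}) — are all used correctly.

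One small stylistic remark: since $G^1_{\A}\subset G^2_{\A}$ forces $\Gamma^1_\ell=\pr_\ell G^1_{\A}\subset \pr_\ell G^2_{\A}=\Gamma^2_\ell$ at \emph{every} $\ell$, your $\Gamma_\ell:=\Gamma^1_\ell\cap\Gamma^2_\ell$ is just $\Gamma^1_\ell$, so $\prod_\ell\Gamma_\ell=\prod_\ell\Gamma^1_\ell$ and the final chain simplifies: $G^1_{\A}$ has finite index in $\prod_\ell\Gamma^1_\ell$, which has finite index in $\prod_\ell\Gamma^2_\ell$, and $G^1_{\A}\subset G^2_{\A}\subset\prod_\ell\Gamma^2_\ell$ then gives the result directly without intersecting against $\prod_\ell\Gamma_\ell$.
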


\begin{proof}
This follows easily from Proposition~\ref{Adelic-criterion}, Lemma~\ref{One-to-finite}, and the compactness of $G^1_{\A}$ and  $G^2_{\A}$.
\end{proof}

For any semisimple $G$ over a field $F$, we denote by $G^{\ad}$ the adjoint quotient of $G$.  The conjugation action of $G$ on itself factors through $G^{\ad}$ at the algebraic group level, so $G^{\ad}(F)$ acts on $G(F)$.  We can characterize special adelic subgroups of $\prod_\ell G(\Q_\ell)$ in terms of this action.

\begin{prop}
If $G$ is a simply connected semisimple group over $\Q$ and $G_\ell = G\times\Q_\ell$ for all $\ell$, then a compact subgroup $\Gamma_{\A}\subset G_{\A}$ is special adelic if and only if
it is conjugate under the action of $G^{\ad}_{\A}:=\prod_\ell G^{\ad}(\Q_\ell)$ on $G_{\A}$ to an open subgroup of $G(\A_f)\subset G_{\A}$.
\end{prop}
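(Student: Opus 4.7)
The plan is to reduce everything to two inputs from Bruhat--Tits theory that I will treat as black boxes: (a) for almost all $\ell$, a fixed $\Z[1/N]$-integral model of $G$ produces a hyperspecial maximal compact subgroup $G(\Z_\ell) \subseteq G(\Q_\ell)$; and (b) the action of $G^{\ad}(\Q_\ell)$ on $G(\Q_\ell)$ by conjugation sends hyperspecial maximal compact subgroups to hyperspecial maximal compact subgroups and permutes them transitively.

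For the ``only if'' direction, I first apply Proposition~\ref{Adelic-criterion} to the commensurability of $\Gamma_\A$ with some $\prod_\ell \Gamma_\ell$, where $\Gamma_\ell$ is maximal compact and hyperspecial for $\ell$ outside a finite set $S$ (enlarged to include the primes of bad reduction). The criterion forces $\Gamma_\ell \subseteq \pr_\ell(\Gamma_\A)$ for $\ell \notin S$, and maximality of $\Gamma_\ell$ combined with compactness of $\pr_\ell(\Gamma_\A)$ upgrades this to equality. Using (b), I choose $g_\ell \in G^{\ad}(\Q_\ell)$ with $g_\ell \Gamma_\ell g_\ell^{-1} = G(\Z_\ell)$ for $\ell \notin S$ and set $g_\ell = 1$ otherwise, producing $g = (g_\ell) \in G^{\ad}_\A$. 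Coordinatewise conjugation then shows that every element of $g\Gamma_\A g^{-1}$ has $\ell$-th coordinate in $G(\Z_\ell)$ for all $\ell \notin S$, so $g\Gamma_\A g^{-1} \subseteq G(\A_f)$; meanwhile, the compact open product subgroup extracted from the commensurability supplies (after conjugation by $g$) a compact open subgroup of $G(\A_f)$ sitting inside $g\Gamma_\A g^{-1}$, yielding openness.

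The converse is simpler. Given $\Gamma_\A = gUg^{-1}$ with $g \in G^{\ad}_\A$ and $U$ open in $G(\A_f)$, compactness of $\Gamma_\A$ forces $U$ to be compact open, and an elementary argument shows any compact open subgroup of $G(\A_f)$ has finite index in a product $\prod_\ell M_\ell$ of maximal compacts with $M_\ell = G(\Z_\ell)$ for almost all $\ell$. Conjugation by $g$ then exhibits $\Gamma_\A$ as a finite-index subgroup of $\prod_\ell g_\ell M_\ell g_\ell^{-1}$, in which, by (b), each factor is a maximal compact subgroup of $G(\Q_\ell)$ and is hyperspecial for all $\ell$ where $M_\ell = G(\Z_\ell)$. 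This is precisely the special adelic condition.

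The main substantive step is invoking (b), i.e., the fact that $G^{\ad}(\Q_\ell)$ acts transitively on hyperspecial vertices of the Bruhat--Tits building of $G$ over $\Q_\ell$; the rest is routine bookkeeping using Proposition~\ref{Adelic-criterion} and the standard product description of compact open subgroups of $G(\A_f)$.
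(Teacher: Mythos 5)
Your proof is correct and follows essentially the same route as the paper's: both rest on the two Bruhat--Tits facts you single out (that a fixed $\Z[1/N]$-model $\cG$ of $G$ gives hyperspecial maximal compacts $\cG(\Z_\ell)$ for $\ell\gg 1$, and that $G^{\ad}(\Q_\ell)$ acts transitively on hyperspecial maximal compacts), together with the commensurability bookkeeping via Proposition~\ref{Adelic-criterion}. The main difference is cosmetic: the paper argues by first describing neighborhoods of the identity in $G(\A_f)$ explicitly through coordinates of the affine model, while you extract the same information through the maximality step $\pr_\ell(\Gamma_\A)=\Gamma_\ell$ for $\ell\notin S$, which makes the containment $g\Gamma_\A g^{-1}\subseteq G(\A_f)$ explicit rather than implicit.
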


\begin{proof}
Let $G = \Spec \Q[x_1,\ldots,x_m]/(f_1,\ldots,f_n)$.  Then 
$$f_i\in \Z[1/N][x_1,\ldots,x_n]$$
for some $N$, and 
$$\cG := \Spec \Z[1/N][x_1,\ldots,x_m]/(f_1,\ldots,f_n)$$
is an affine scheme with generic fiber $G$.  Replacing $N$ with a suitable positive integral multiple, we may assume first that $\cG$ is a group scheme,
next that $\cG$ is smooth,
and finally that it has reductive fibers \cite[XIX~2.6]{SGA3}.
We have $G(\A_f) = \cG(\A_f)$.  
A subset $X$ of $G(\A_f)$ is a neighborhood of $(a_1,\ldots,a_m)\in G(\A_f)\subset \A_f^m$ if and only if
there exist open sets $U_\ell\subset \Q_\ell$ such that $U_\ell = \Z_\ell$ for all $\ell\gg 1$ 
and such that if $(b_i)_\ell - (a_i)_\ell \in U_\ell$ for all $i$ and $\ell$
and $(b_1,\ldots,b_m)\in G(\A_f)$, then $(b_1,\ldots,b_m)\in X$.  In particular, if $\ell$ is large enough that all $(a_i)_\ell\in \Z_\ell$, $U_\ell=\Z_\ell$, and $\ell\nmid N$,
then the condition on $((b_1)_\ell,\ldots,(b_m)_\ell)$ is just that it lies in $\cG(\Z_\ell)$. 
Since $\cG(\Z_\ell)\subset G(\Q_\ell)$ is hyperspecial maximal compact for all $\ell\gg1$ and all hyperspecial maximal compact subgroups of $G(\Q_\ell)$ are $G^{\ad}(\Q_\ell)$-conjugate \cite[p.~47]{Tits}, any special adelic $\Gamma_{\A}$ is conjugate in $G^{\ad}_{\A}$ to a neighborhood of the identity in $G(\A_f)$.

Conversely, embed $G$ into $\GL_r$. Then every compact open subgroup $\Gamma_{\A}$ of $G(\A_f)$ contains an open subgroup $\prod_\ell \Gamma_\ell$ 
of $G(\A_f)\cap\GL_r(\hat{\Z})$, where $\Gamma_\ell$ is open in $G(\Q_\ell)$.
For all $\ell\gg 1$, $\cG$ is smooth and reductive over $\Z_\ell$, and it follows that $\Gamma_\ell=\cG(\Z_\ell)$ is a hyperspecial maximal compact subgroup of
$G(\Q_\ell) = \cG(\Q_\ell)$.  Thus, $\Gamma_{\A}$ and therefore all $G^{\ad}_{\A}$-conjugates of $\Gamma_{\A}$ in $G_{\A}$ are special adelic.
\end{proof}

\begin{prop}
If $G$ is a simply connected, absolutely almost simple algebraic group over $\Q$ and $\Gamma$ is a finitely generated, Zariski dense subgroup 
of $G(\Q)$ which is relatively compact in $G(\Q_\ell)$ for all $\ell$, then the closure $\bar\Gamma$ of $\Gamma$ in $\prod_\ell G(\Q_\ell)$ is special adelic.
\end{prop}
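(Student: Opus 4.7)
The plan is to verify the two conditions of Proposition~\ref{Adelic-criterion} in its special-adelic form. The decisive input is the strong approximation theorem for thin groups of Matthews--Vaserstein--Weisfeiler, whose hypotheses (simply connected, absolutely almost simple, finitely generated, Zariski dense) match ours exactly.

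First I would fix a faithful rational representation $G\hookrightarrow\GL_r$. Since $\Gamma$ is finitely generated, $\Gamma\subset\GL_r(\Z[1/N])$ for some $N$; after enlarging $N$ as in the preceding proposition, the scheme-theoretic closure $\cG\subset\GL_{r,\Z[1/N]}$ is smooth with reductive fibres, so $\cG(\Z_\ell)$ is a hyperspecial maximal compact subgroup of $G(\Q_\ell)$ for every $\ell\nmid N$. Then $\pr_\ell(\Gamma)\subset\cG(\Z_\ell)$ for $\ell\nmid N$, while for each $\ell\mid N$ the relative compactness hypothesis supplies a compact open subgroup $K_\ell\subset G(\Q_\ell)$ containing $\pr_\ell(\Gamma)$. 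Setting $K_\ell:=\cG(\Z_\ell)$ for $\ell\nmid N$, we have $\bar\Gamma\subset\prod_\ell K_\ell$.

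Next I would verify condition~(i), that $\pr_\ell(\bar\Gamma)$ is open in $G(\Q_\ell)$ for each $\ell$. Being compact, $\pr_\ell(\bar\Gamma)$ is $\ell$-adic analytic with a well-defined Lie algebra $\gothh_\ell\subset\g_{\Q_\ell}$; this Lie algebra is $\Ad$-invariant under $\pr_\ell(\bar\Gamma)$, hence (by Zariski density of $\Gamma$, and therefore of $\pr_\ell(\Gamma)$, in $G$) an ideal of $\g_{\Q_\ell}$. Absolute simplicity of $G$ makes $\g_{\Q_\ell}$ a simple $\Q_\ell$-Lie algebra, so $\gothh_\ell\in\{0,\g_{\Q_\ell}\}$. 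Injectivity of $G(\Q)\hookrightarrow G(\Q_\ell)$ forces $\pr_\ell(\Gamma)\cong\Gamma$ to be infinite, so $\gothh_\ell\ne 0$ and $\pr_\ell(\bar\Gamma)$ is open.

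The heart of the argument is condition~(ii). I would invoke the theorem of Matthews--Vaserstein--Weisfeiler (combined with Nori's mod-$\ell$ theorem): since $G$ is simply connected and absolutely almost simple and $\Gamma\subset\cG(\Z[1/N])$ is a finitely generated Zariski-dense subgroup, there exists a finite set of primes $S\supset\{\ell:\ell\mid N\}$ such that the closure of $\Gamma$ in $\prod_{\ell\notin S}G(\Q_\ell)$ equals $\prod_{\ell\notin S}\cG(\Z_\ell)$. This gives exactly the containment $\prod_{\ell\notin S}\cG(\Z_\ell)\subset\pr_{\cL\setminus S}(\bar\Gamma)$ demanded by~(ii), with each factor hyperspecial. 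Proposition~\ref{Adelic-criterion} then yields that $\bar\Gamma$ is special adelic. The main obstacle is this last step: Nori's theorem alone only controls the projection at each individual prime, whereas the \emph{simultaneous} surjection onto the restricted product is the deeper MVW/Weisfeiler input and is what makes the closure adelic rather than merely open at each place.
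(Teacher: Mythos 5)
Your proof is correct and follows essentially the same route as the paper's: verify the two conditions of Proposition~\ref{Adelic-criterion}, using Matthews--Vaserstein--Weisfeiler for condition~(ii) and openness of the local closures for condition~(i). The only cosmetic difference is in condition~(i): the paper simply cites ``a classical theorem of Chevalley'' that a compact Zariski-dense subgroup of $G(\Q_\ell)$ is open, whereas you unwind that theorem into its proof (the Lie algebra of the compact group is an $\Ad$-invariant ideal, hence all of $\g_{\Q_\ell}$ by absolute simplicity). Likewise, for~(ii) you state MVW as giving that the closure of $\Gamma$ in $\prod_{\ell\notin S}\cG(\Z_\ell)$ \emph{equals} the full product, while the paper states it as \emph{open} in the product; the two formulations agree after enlarging $S$ by finitely many primes, which is harmless for~(ii). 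These are presentational differences, not mathematical ones.
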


\begin{proof}
We can regard $G$ as the generic fiber of an affine group scheme $\cG$ defined over $\Z[1/N]$ for some $N$.
By Matthews, Vaserstein, and Weisfeiler \cite{MVW}, if $\ell$ is sufficiently large, the closure of $\Gamma$ in $G(\Q_\ell)$
with respect to the $\ell$-adic topology equals $\cG(\Z_\ell)\subset \cG(\Q_\ell) = G(\Q_\ell)$.  Moreover,
for sufficiently large $M$,  the closure of $\Gamma$ in $\prod_{\ell>M} \cG(\Z_\ell)$ is open.  Defining $\Gamma_\ell$ to be the closure of the image of $\Gamma$ in $G(\Q_\ell)$,
we see that all but finitely many $\Gamma_\ell$ are hyperspecial.  Moreover, for all $\ell$, $\Gamma_\ell$ is a compact Zariski-dense subgroup of $G(\Q_\ell)$, and therefore
by a classical theorem of Chevalley, it is open in $G(\Q_\ell)$.  The proposition follows immediately by Proposition \ref{Adelic-criterion}.
\end{proof}

\begin{prop}\label{coro}
Together, the Mumford-Tate conjecture and Conjecture~\ref{main} imply Conjecture~\ref{MT}.
\end{prop}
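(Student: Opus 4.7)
The plan is to transport everything to the Mumford-Tate side and exploit one crucial observation: although the special adelic condition is formulated inside the unrestricted product $\prod_\ell H^{\sc}(\Q_\ell)$, the set $\Gamma:=\kappa(\Phi_\A(G_K), \Phi_\A(G_K))^k$ in fact lies in the restricted product $H^{\sc}(\A_f)$. Indeed, the Mumford-Tate conjecture supplies isomorphisms $G_\ell^{\sc} \cong H^{\sc}\times\Q_\ell$ intertwining $\kappa_\ell$ with $\kappa$; and since $\Phi_\A(G_K)\subset H(\A_f)$ by Mumford-Tate, while $\kappa\colon H\times H\to H^{\sc}$ is a morphism of $\Q$-schemes sending $H(\A_f)\times H(\A_f)$ into $H^{\sc}(\A_f)$, I get $\Gamma\subset H^{\sc}(\A_f)$. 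Under these identifications, Conjectures~\ref{MT} and~\ref{main} refer to the same subset of $\prod_\ell H^{\sc}(\Q_\ell)$.

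Assuming Conjecture~\ref{main}(1), $\Gamma$ is commensurable with $\prod_\ell\Gamma_\ell$ for some system of maximal compact subgroups $\Gamma_\ell\subset H^{\sc}(\Q_\ell)$ that are hyperspecial for $\ell\gg 1$. The intersection $\Gamma\cap\prod_\ell\Gamma_\ell$ is open in $\prod_\ell\Gamma_\ell$ for the product topology, so it contains a subproduct $\prod_\ell\Gamma_\ell^\circ$ with $\Gamma_\ell^\circ$ open in $\Gamma_\ell$ and $\Gamma_\ell^\circ=\Gamma_\ell$ for all but finitely many $\ell$. Fix a smooth $\Z[1/N]$-model $\cH^{\sc}$ of $H^{\sc}$ with reductive fibres; the containment $\prod_\ell\Gamma_\ell^\circ\subset\Gamma\subset H^{\sc}(\A_f)$ forces $\Gamma_\ell^\circ\subset\cH^{\sc}(\Z_\ell)$ for almost all $\ell$, for otherwise one could construct an element of the product violating the restricted-product condition. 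For $\ell$ sufficiently large, $\Gamma_\ell^\circ=\Gamma_\ell$ and $\cH^{\sc}(\Z_\ell)$ are both hyperspecial, hence maximal compact, so this containment becomes equality. Thus $\prod_\ell\Gamma_\ell^\circ$ is an open subgroup of $H^{\sc}(\A_f)$ contained in $\Gamma$; compactness of $\Gamma$ then makes $\Gamma$ a finite union of cosets of this open subgroup, hence itself open. This proves Conjecture~\ref{MT}(1); part~(2) follows verbatim from Conjecture~\ref{main}(2), since any special adelic subgroup contained in $\Gamma$ is automatically contained in $H^{\sc}(\A_f)$, and the same argument shows it must be open there.

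The main obstacle is the comparison step asserting $\Gamma_\ell^\circ=\cH^{\sc}(\Z_\ell)$ for $\ell\gg 1$: a priori two distinct hyperspecial compacts could both sit inside $H^{\sc}(\Q_\ell)$, and it is the combination of maximal compactness with the defining property of the restricted product that forces equality. Once this is in place, the deduction is purely formal bookkeeping on top of the Mumford-Tate identification.
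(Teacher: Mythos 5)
Your proof is correct and follows essentially the same approach as the paper: both reduce to showing that a special adelic subgroup of $\prod_\ell H^{\sc}(\Q_\ell)$ that already sits inside $H^{\sc}(\A_f)$ must be open there, and both exploit the observation that the restricted-product constraint forces the maximal compact factors $\Gamma_\ell$ to coincide with the integral-model points $\cH^{\sc}(\Z_\ell)$ for $\ell\gg 1$ (the paper phrases this via a $\GL_m$-embedding rather than an abstract $\Z[1/N]$-model, but that is cosmetic). Your opening paragraph explicitly justifies that $\Gamma$ lands in the restricted product, which the paper takes as given, but otherwise the two arguments are the same.
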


\begin{proof} Let $U$ be a subgroup of $H^{\sc}(\A_f)$ which is a compact, special adelic subgroup of $\prod_\ell H^{\sc}(\Q_\ell)$. It suffices to show that $U$ contains an open subgroup of $H^{\sc}(\A_f)$. Embed $H^{\sc}$ in some $\GL_m$. 
Since $U$ is special adelic, it contains $\prod_{\ell\in S}\Gamma_\ell'\times \prod_{\ell\notin S}\Gamma_\ell$ (for a finite subset $S$ of $\mathcal{L}$) such that $\Gamma_\ell'$ 
is open and of finite index in $H^{\sc}(\Q_\ell)\cap\GL_m(\Z_\ell)$ and $\Gamma_\ell$ is a hyperspecial maximal compact subgroup of $H^{\sc}(\Q_\ell)$.  
It suffices to show that $\prod_{\ell\in S}\Gamma_\ell'\times \prod_{\ell\notin S}\Gamma_\ell$ is open in $H^{\sc}(\A_f)$. Since 
$$\prod_{\ell\in S}\Gamma_\ell'\times \prod_{\ell\notin S}\Gamma_\ell\subset H^{\sc}(\A_f)$$ 
is a direct product, we assume $\Gamma_\ell$ is a subgroup of $\GL_m(\Z_\ell)$ for $\ell\notin S$. Since 
$\Gamma_\ell$ is maximal compact,  $\Gamma_\ell=H^{\sc}(\Q_\ell)\cap \GL_m(\Z_\ell)$ and we conclude that 
$$\prod_{\ell\in S}\Gamma_\ell'\times \prod_{\ell\notin S}\Gamma_\ell\subset H^{\sc}(\A_f)\cap \GL_m(\widehat{\Z})$$
is of finite index. Since $H^{\sc}(\A_f)\cap \GL_m(\widehat{\Z})$ is open in $H^{\sc}(\A_f)$, we are done.
\end{proof}

\section{Finite products of commutators}

Let $G$ be a connected reductive algebraic group over a field $F$ of characteristic $0$.  Let $Z$ denote the 
identity component of the center of $G$ and $G^{\sc}$ the universal covering group of the derived group $G^{\der}$
of $G = G^{\der} Z$.  The covering map $\pi\colon G^{\sc}\to G^{\der}$ defines an isogeny $G^{\sc}\times Z\to G$ which is separable and therefore central \cite[22.3]{Borel}.
By the definition of \emph{central} (\cite[2.2]{Borel-Tits}), the commutator morphism on $G^{\sc}\times Z$ factors through a morphism $G\times G\to G^{\sc}\times Z$.  Composing with  projection
onto the first factor, we obtain a morphism $\kappa\colon G\times G\to G^{\sc}$ which is defined so that if $x_1,x_2\in G^{\sc}(\bar F)$ and $z_1,z_2\in Z(\bar F)$, then 
$$\kappa(\pi(x_1) z_1,\pi(x_2) z_2) = [x_1,x_2].$$

Given a subgroup $\Gamma\subset G(F)$, there is a commutative diagram
\begin{equation*}
\label{square}
\xymatrix{[\Gamma,\Gamma]\ar@{^{(}->}[d]\ar[r] &G^{\sc}(F)\ar[d] \\
		\Gamma\ar@{^{(}->}[r] & G(F).\\
}
\end{equation*}

\begin{prop}\label{3.1}
If $\Gamma_\ell\subset G_\ell(\Q_\ell)$ is a compact subgroup, then there exists $k$ 
such that every element in the subgroup $\Delta_\ell\subset G_\ell^{\sc}(\Q_\ell)$ generated by 
$\kappa_\ell(\Gamma_\ell,\Gamma_\ell)$ is a product of $k$ elements in this set.
Moreover, if the image of $\Gamma_\ell$ in the adjoint quotient $G_\ell^{\ad}$ of $G$ is Zariski-dense, then 
$\Delta_\ell$ is a compact open subgroup of $G_\ell^{\sc}(\Q_\ell)$.
\end{prop}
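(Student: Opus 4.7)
The plan is to combine a Baker-Campbell-Hausdorff analysis near the identity with Chevalley's theorem on compact Zariski-dense subgroups, exploiting the isogeny $\pi\colon G_\ell^{\sc}\to G_\ell^{\der}$.

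First I would record that $X:=\kappa_\ell(\Gamma_\ell,\Gamma_\ell)\subset G_\ell^{\sc}(\Q_\ell)$ is compact, symmetric (since $\kappa_\ell(g,h)^{-1}=\kappa_\ell(h,g)$, reflecting $[x_1,x_2]^{-1}=[x_2,x_1]$), and contains $1$ (since $\kappa_\ell(g,g)=1$). Observing that $\pi\circ\kappa_\ell$ is the ordinary commutator map $G_\ell\times G_\ell\to G_\ell^{\der}$, one has $\pi(\Delta_\ell)\subset[\Gamma_\ell,\Gamma_\ell]\subset\Gamma_\ell$, which is bounded; since $\pi$ is an isogeny (proper with finite fibres), $\Delta_\ell$ is relatively compact in $G_\ell^{\sc}(\Q_\ell)$, with compact closure $\bar\Delta_\ell$. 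Write $\mathfrak{l}$ for the Lie algebra of $\Gamma_\ell$ as a $p$-adic Lie subgroup of $G_\ell(\Q_\ell)$ and $\mathfrak{m}$ for that of $\bar\Delta_\ell$. Since $\pi_*$ is a Lie algebra isomorphism in characteristic $0$, $\pi_*(\mathfrak{m})=[\mathfrak{l},\mathfrak{l}]$.

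For the first assertion, the key is to exhibit some $N$ so that $X^N$ contains a $p$-adic open neighborhood of $1$ in $\bar\Delta_\ell$. Fix a $\Q_\ell$-basis $x_1,\ldots,x_r$ of $\mathfrak{l}$ and set $N:=\binom{r}{2}$, so every element of $[\mathfrak{l},\mathfrak{l}]$ is a sum of $N$ brackets of basis elements. Consider the $\Q_\ell$-analytic map $\Phi$ defined on a neighborhood of $0$ in $\mathfrak{l}^{2N}$ by
\[\Phi(u_1,v_1,\ldots,u_N,v_N)=\kappa_\ell(\exp u_1,\exp v_1)\cdots\kappa_\ell(\exp u_N,\exp v_N),\]
whose image is contained in $X^N$. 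By the $p$-adic Baker-Campbell-Hausdorff formula, $\log\Phi$ has Taylor expansion whose leading term is the $\Q_\ell$-bilinear form $\Phi_2(u,v)=\sum_{i=1}^N\pi_*^{-1}[u_i,v_i]$; indexing the $N$ pairs by $(i,j)$ with $i<j$ and setting $\tilde u_{(i,j)}=x_i,\tilde v_{(i,j)}=x_j$, one checks that the differential $d\Phi_2(\tilde u,\tilde v)$ surjects onto $\mathfrak{m}$. Rescaling $(u,v)\mapsto(t\tilde u,t\tilde v)$ and applying the $p$-adic implicit function theorem at this rescaled point, where the quadratic leading term in $t$ dominates the higher-order corrections, one concludes that $\Phi$ is submersive on a nonempty $p$-adic open subset of its domain, so $X^N$ contains an open neighborhood of $1$ in $\bar\Delta_\ell$. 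A compactness/covering argument inside $\bar\Delta_\ell$---the open subgroup generated by this neighborhood has finite index, and density of $\Delta_\ell$ supplies coset representatives of bounded word length in $X$---then yields $M$ with $\bar\Delta_\ell=X^{NM}$, forcing $\Delta_\ell=\bar\Delta_\ell=X^{NM}$.

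For the second assertion, suppose the image of $\Gamma_\ell$ in $G_\ell^{\ad}$ is Zariski-dense. Since $G_\ell^{\ad}$ is semisimple, $[G_\ell^{\ad},G_\ell^{\ad}]=G_\ell^{\ad}$, so $[\Gamma_\ell,\Gamma_\ell]$ has Zariski-dense image in $G_\ell^{\ad}$; lifting through the isogenies $G_\ell^{\sc}\to G_\ell^{\der}\to G_\ell^{\ad}$ shows $\Delta_\ell$ is Zariski-dense in $G_\ell^{\sc}$. Combined with the compactness of $\Delta_\ell$ from the first assertion, Chevalley's theorem (as invoked in the proof of the preceding proposition) yields that $\Delta_\ell$ is open in $G_\ell^{\sc}(\Q_\ell)$. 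The main obstacle is the Baker-Campbell-Hausdorff/implicit function step: one must manage $p$-adic convergence radii carefully so that the quadratic leading term of $\log\Phi$ dominates the higher-order corrections on a common open ball, and verify surjectivity of the differential of the rescaled $\Phi$ at the chosen base point; the combinatorial input---writing an arbitrary element of $[\mathfrak{l},\mathfrak{l}]$ as a sum of $\binom{r}{2}$ brackets in a basis---is elementary.
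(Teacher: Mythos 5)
Your proof of the second assertion (openness under Zariski-density) matches the paper's: $\kappa_\ell$ factors through $G_\ell^{\ad}\times G_\ell^{\ad}$, so Zariski-density of the image of $\Gamma_\ell$ in $G_\ell^{\ad}$ gives Zariski-density of $\Delta_\ell$, and then Chevalley's theorem applies. However, your proof of the first assertion has a genuine gap at the Lie algebra identity $\pi_*(\mathfrak{m})=[\mathfrak{l},\mathfrak{l}]$, which is false for general compact $\Gamma_\ell$.

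Concretely, take $G_\ell=\GL_2$, $G_\ell^{\sc}=\SL_2$, and let $\Gamma_\ell$ be the $\Z_\ell$-points of the normalizer of the diagonal torus, i.e.\ $\Gamma_\ell=T(\Z_\ell)\cdot\{1,w\}$ with $w$ the Weyl element. The Lie algebra $\mathfrak{l}$ is the diagonal Cartan, which is abelian, so $[\mathfrak{l},\mathfrak{l}]=0$. Yet $[\Gamma_\ell,\Gamma_\ell]$ contains $[\mathrm{diag}(\lambda,\mu),w]=\mathrm{diag}(\lambda\mu^{-1},\mu\lambda^{-1})$, hence the whole one-parameter group $\{\mathrm{diag}(s,s^{-1}):s\in\Z_\ell^\times\}$, so $\mathfrak{m}$ is one-dimensional. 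The point is that commutators $[\gamma,\exp(v)]$ with $\gamma$ a torsion element far from the identity produce tangent directions of the form $(1-\Ad(\gamma))\mathfrak{l}$, which are not contained in $[\mathfrak{l},\mathfrak{l}]$. Since your map $\Phi$ is expanded only around the identity, its leading term $\Phi_2$ surjects at best onto $\pi_*^{-1}[\mathfrak{l},\mathfrak{l}]$, which in the example above is $0$. Thus $X^N$ need not contain an open neighborhood of $1$ in $\bar\Delta_\ell$, and the covering argument that follows collapses. (Note your Zariski-density hypothesis in the second assertion does repair the identity --- then $\mathfrak{l}$ surjects onto the semisimple algebra $\mathrm{Lie}(G_\ell^{\ad})$, forcing $[\mathfrak{l},\mathfrak{l}]=\mathrm{Lie}(G_\ell^{\der})$ --- but the first assertion makes no such hypothesis.)

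The paper avoids this entirely. It passes to a finite extension $E_\lambda/\Q_\ell$ large enough that $\tilde\Gamma_\ell:=\pi^{-1}(\Gamma_\ell)\subset(Z_\ell\times G_\ell^{\sc})(E_\lambda)$ is a genuine compact $p$-adic analytic group, observes that $\Delta_\ell$ is the group generated by the honest commutator set $[\tilde\Gamma_\ell,\tilde\Gamma_\ell]$, and then invokes Jaikin-Zapirain's theorem on finite verbal width in compact $p$-adic analytic groups to get the uniform bound $k$. That theorem handles the torsion and disconnectedness phenomena uniformly; a BCH expansion around the identity does not.
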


\begin{proof}

Consider the isogeny $\pi\colon Z_\ell\times G_\ell^{\sc}\to G_\ell$.  Let $E_\lambda$ be a finite extension of $\Q_\ell$ containing the Galois closure of
every extension of $\Q_\ell$ of degree $\le \deg\pi$.  Thus, for $x\in G_\ell(\Q_\ell)$, every point of $\pi^{-1}(x)$ is defined over $E_\lambda$.
Let 
$$\tilde \Gamma_\ell := \pi^{-1}(\Gamma_\ell)\subset (Z_\ell\times G_\ell^{\sc})(E_\lambda).$$
As $\pi$ is finite, it is projective, so $(Z_\ell\times G_\ell^{\sc})(E_\lambda)\to G_\ell(E_\lambda)$ is a proper map,
and it follows that $\tilde \Gamma_\ell\to \Gamma_\ell$ is proper and therefore that $\tilde \Gamma_\ell$ is compact.  By definition, 
$\Delta_\ell$ is the group generated by $[\tilde \Gamma_\ell,\tilde \Gamma_\ell]$.
By a theorem of Jaikin-Zapirain \cite[1.3]{JZ},
there exists $k$, such that every element of $\Delta_\ell$ is a product of $k$ elements of  $[\tilde \Gamma_\ell,\tilde \Gamma_\ell]$. 

Now, $\kappa_\ell$ factors through $G_\ell^{\ad}\times G_\ell^{\ad}$.  If the image of $\Gamma_\ell$ in $G_\ell^{\ad}(\Q_\ell)$ is Zariski-dense,
then $\Delta_\ell$ is Zariski-dense and compact and therefore open by a theorem of Chevalley (see, e.g., \cite{Pink}).
\end{proof}

The following result can be looked at as a weak $\ell$-adic analogue of Got\^o's theorem \cite{Goto} that every element of a compact semisimple real Lie group is a commutator.

\begin{prop}
Let $G_\ell$ be a semisimple group over $\Q_\ell$ and $\Gamma_\ell$ an open subgroup of $G_\ell(\Q_\ell)$.  Then
$$\{xyx^{-1}y^{-1}\mid x,y\in \Gamma_\ell\}$$
has non-empty interior.
\end{prop}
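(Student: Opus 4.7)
The plan is to produce a pair $(x_0,y_0)\in\Gamma_\ell\times\Gamma_\ell$ at which the commutator morphism
$$c\colon G_\ell\times G_\ell\to G_\ell,\qquad c(x,y)=xyx^{-1}y^{-1},$$
has surjective differential. Granting this, the $p$-adic submersion (implicit function) theorem produces an $\ell$-adic open neighborhood $W$ of $(x_0,y_0)$, which we may take inside $\Gamma_\ell\times\Gamma_\ell$, such that $c(W)$ is open in $G_\ell(\Q_\ell)$; this exhibits an open set of commutators.

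To produce such a pair I would first establish that $c$ is dominant as a morphism of algebraic varieties. The cleanest route is to cite Ree's theorem: the commutator map on a connected semisimple algebraic group in characteristic zero is surjective on geometric points. A more hands-on alternative is to compute, from the Baker--Campbell--Hausdorff formula, the differential
\begin{equation*}
dc_{(x_0,y_0)}(X,Y)=\Ad(y_0x_0y_0^{-1})\bigl(1-\Ad(y_0)\bigr)X+\Ad(y_0)\bigl(\Ad(x_0)-1\bigr)Y,
\end{equation*}
and then to choose $x_0,y_0$ regular semisimple with centralizing maximal tori in sufficiently generic position, so that the images of $1-\Ad(y_0)$ and $\Ad(x_0)-1$, after the adjoint twists, together span $\g_\ell$; this is a straightforward matrix check in $\SL_2$ and extends to the general simple case.

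Given dominance, generic smoothness in characteristic zero yields a non-empty Zariski-open subset $U\subset G_\ell\times G_\ell$ on which $dc$ is surjective. Because $G_\ell$ is connected and $\Gamma_\ell$ is $\ell$-adic open in $G_\ell(\Q_\ell)$, $\Gamma_\ell$ is Zariski-dense in $G_\ell$: its Zariski closure is an algebraic subgroup, and a proper algebraic subgroup cannot contain an $\ell$-adic open set. Hence $\Gamma_\ell\times\Gamma_\ell$ meets $U$, and any point of the intersection provides the required $(x_0,y_0)$.

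The main obstacle is dominance of $c$; the rest---the $p$-adic implicit function theorem, generic smoothness in characteristic zero, and Zariski density of $\Gamma_\ell$---is entirely standard. Since the first step only demands a single pair at which $dc$ is surjective, one can also bypass Ree altogether and exhibit such a pair by hand using the formula above, which is probably the quickest route to a self-contained write-up.
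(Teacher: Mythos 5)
Your proposal is correct and follows essentially the same route as the paper: establish dominance of the commutator morphism (the paper cites Got\^o's theorem or Borel's theorem on word maps, you cite Ree's theorem — all valid), invoke generic smoothness in characteristic zero, use the fact that a proper Zariski-closed subset of $G_\ell\times G_\ell$ has empty $\ell$-adic interior to find a smooth point inside $\Gamma_\ell\times\Gamma_\ell$, and conclude by the $\ell$-adic implicit function theorem. The only cosmetic difference is that you phrase the density step as ``$\Gamma_\ell$ is Zariski dense,'' whereas the paper phrases it as ``the singular locus has empty interior''; these are the same observation.
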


\begin{proof}
First we claim that if $G$ is any semisimple algebraic group over a field of characteristic zero, the commutator morphism $G\times G\to G$
is dominant.  This can be deduced from Got\^o's theorem or from Borel's theorem \cite{Borel-Word} that any non-trivial word in a free group on $n$ letters
defines a dominant morphism $G^n\to G$.  Every dominant morphism of varieties in characteristic zero is generically smooth, so there exists a proper Zariski-closed subset $X\subset G_\ell\times G_\ell$ such that the commutator map $G_\ell^2\to G_\ell$ is smooth outside $X$.
Now, the interior of $X(\Q_\ell) \subset G_\ell^2(\Q_\ell)$ is trivial, so $\Gamma_\ell^2$ has some point $(x,y)$ which is a smooth point for
the commutator map, and it follows from the $\ell$-adic implicit function theorem, that $xyx^{-1}y^{-1}$ is an interior point of
the set of commutators of $\Gamma_\ell$.
\end{proof}

\begin{cor}
\label{goto}
Let $G_\ell$ be a semisimple group over $\Q_\ell$ and $\Gamma_\ell$ an open subgroup of $G_\ell(\Q_\ell)$.  Then
there exists an open subgroup of $G_\ell(\Q_\ell)$ in which every element is a product of two commutators of elements of $\Gamma_\ell$.
\end{cor}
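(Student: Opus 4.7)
The plan is to reduce the corollary to the preceding proposition by standard topological-group manipulations. Let $C := \{xyx^{-1}y^{-1}\mid x,y\in \Gamma_\ell\}$, which by the previous proposition has non-empty interior. Pick an open set $U\subset C$. The first observation is that $C$ is stable under inversion: for every commutator $[x,y]\in C$ we have $[x,y]^{-1}=[y,x]\in C$. Since inversion is a self-homeomorphism of $G_\ell(\Q_\ell)$, the set $U^{-1}$ is also open, and $U^{-1}\subset C$.

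Next I would form the product set $U\cdot U^{-1}$. For any $u\in U$, $u\cdot u^{-1}=1$, so the identity lies in $U\cdot U^{-1}$. Moreover $U\cdot U^{-1}=\bigcup_{v\in U^{-1}} Uv$ is a union of open translates of $U$, hence open. Thus $U\cdot U^{-1}$ is an open neighborhood of the identity contained in $C\cdot C$.

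Finally I would invoke the fact that $G_\ell(\Q_\ell)$, as an $\ell$-adic Lie group, is totally disconnected and locally compact, so the identity admits a neighborhood basis consisting of open (in fact, compact open) subgroups. Choose such an open subgroup $V\subset U\cdot U^{-1}$. Then $V\subset C\cdot C$, which is exactly the statement that every element of $V$ is a product of two commutators of elements of $\Gamma_\ell$.

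There is no serious obstacle here; the only point one has to be slightly careful about is that the open neighborhood $U\cdot U^{-1}$ of the identity need not itself be a subgroup, which is why one appeals to the standard basis of open subgroups in an $\ell$-adic Lie group to extract the required $V$. The hypothesis that $\Gamma_\ell$ is open in $G_\ell(\Q_\ell)$ is used only through the preceding proposition to ensure that $C$ has non-empty interior.
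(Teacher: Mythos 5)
Your proof is correct and follows essentially the same route as the paper: use the preceding proposition to get a non-empty open set of commutators, use stability of commutators under inversion to translate it to an open neighborhood of the identity inside the set of products of two commutators, and then invoke the basis of open subgroups at the identity in an $\ell$-adic group. You are only slightly more explicit than the paper about the final extraction of an open subgroup $V$.
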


\begin{proof}
If $z$ is a commutator, then $z^{-1}$ is a commutator as well.  If a neighborhood of $z$ consists of commutators, then
a neighborhood of the identity consists of products of two commutators.
\end{proof}

The following theorem is a  variant of a result of Avni-Gelander-Kassabov-Shalev \cite[Theorem~2.3]{AGKS}.

\begin{thm}
\label{Avni}
Let $G_\ell$ be a simple connected semisimple algebraic group over $\Q_\ell$, with $\ell  > \max(5,\dim G_\ell)$.
If $\Gamma_\ell$ is a hyperspecial maximal compact subgroup of $G_\ell(\Q_\ell)$,  then 
every element in $\Gamma_\ell$ is a product of two commutators of elements of $\Gamma_\ell$. In particular, $\Gamma_\ell$ is perfect.
\end{thm}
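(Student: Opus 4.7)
The plan is to follow the strategy of \cite[Theorem~2.3]{AGKS}, separating the argument into a finite-quotient part handled by Ore's conjecture and a pro-$\ell$ part handled by Lazard's correspondence. Since $\Gamma_\ell$ is hyperspecial maximal compact, I would realize it as $\mathcal{G}(\Z_\ell)$ for a smooth $\Z_\ell$-group scheme $\mathcal{G}$ with simply connected, almost simple reductive fibers and generic fiber $G_\ell$. Let $\Gamma_\ell^{(1)} := \ker(\mathcal{G}(\Z_\ell)\to\mathcal{G}(\F_\ell))$ be the first congruence subgroup and set $\bar{G} := \mathcal{G}(\F_\ell)$; the perfectness assertion then follows for free once every element of $\Gamma_\ell$ is exhibited as a product of commutators.

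The first step is to show that every element of $\bar{G}$ is a product of two commutators of elements of $\bar{G}$. Under the hypothesis $\ell>\max(5,\dim G_\ell)$, the finite group $\bar{G}$ is quasi-simple of Lie type with $|Z(\bar{G})|$ prime to $\ell$, and $\bar{G}$ is itself perfect via the Chevalley commutator relations in the large-characteristic regime. By the Ore conjecture, proved by Liebeck--O'Brien--Shalev--Tiep, every element of the simple quotient $\bar{G}/Z(\bar{G})$ is a single commutator; an arbitrary lift to $\bar{G}$ realizes a given element up to a central correction, and such a correction is itself a commutator in $\bar{G}$ (using perfectness), giving the desired bound of two. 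The second step is the pro-$\ell$ correction: because $\ell>\dim G_\ell$, the group $\Gamma_\ell^{(1)}$ is uniform pro-$\ell$ and is put in Lazard correspondence with the Lie lattice $\ell\cdot\g(\Z_\ell)$, where $\g = \mathrm{Lie}(\mathcal{G})$. The Lie algebra $\g$ is perfect integrally over $\Z_\ell$ since $G_\ell$ is semisimple and $\ell$ avoids all relevant structure-constant denominators; combining $[\g(\Z_\ell),\g(\Z_\ell)]=\g(\Z_\ell)$ with the Baker--Campbell--Hausdorff formula shows, exactly as in \cite{AGKS}, that every element of $\Gamma_\ell^{(1)}$ is already a single commutator of elements of $\Gamma_\ell^{(1)}$.

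To glue the two steps, given $g\in\Gamma_\ell$, I would use Step~1 to find $x_i,y_i\in\Gamma_\ell$ with $[x_1,y_1][x_2,y_2]\equiv g\pmod{\ell}$, so that $h := ([x_1,y_1][x_2,y_2])^{-1}g$ lies in $\Gamma_\ell^{(1)}$; by Step~2, $h=[u,v]$ for some $u,v\in\Gamma_\ell^{(1)}$, and then the $\ell$-adic implicit-function argument used in Corollary~\ref{goto} lets one perturb $(x_2,y_2)$ within their $\Gamma_\ell^{(1)}$-cosets so as to absorb $[u,v]$, producing $g$ as a product of exactly two commutators. The main obstacle is this quantitative pro-$\ell$ step: it is essential that a \emph{single} commutator (not merely an open set of candidates) in $\Gamma_\ell^{(1)}$ realizes any prescribed element, and this forces the Lazard correspondence to be carried out in a way compatible with the integral Chevalley structure of $\mathcal{G}$---this is the technical heart of the AGKS argument that must be reproduced in the present setting, and where the precise hypothesis $\ell > \max(5,\dim G_\ell)$ is used to ensure both that BCH converges on $\g(\Z_\ell)$ and that no small-prime torsion interferes with the integral perfectness of $\g$.
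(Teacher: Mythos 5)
Your overall framework (finite quotient via Ore plus a pro-$\ell$ correction) is in the spirit of AGKS, but there is a genuine gap in the gluing step, and the pro-$\ell$ step you lean on is both harder to make rigorous than you suggest and, once the gap is filled, unnecessary. The crux: after writing $\bar g\equiv[\bar x_1,\bar y_1][\bar x_2,\bar y_2]$ and producing $h=[u,v]\in\Gamma_\ell^{(1)}$ with $g=[x_1,y_1][x_2,y_2][u,v]$, you want to perturb $(x_2,y_2)$ in its $\Gamma_\ell^{(1)}$-coset to absorb $[u,v]$. That perturbation (Hensel/implicit function theorem) requires the commutator morphism $\cG^2\to\cG$ to be submersive at the \emph{specific} point $(\bar x_2,\bar y_2)$, i.e.\ that $(1-\Ad(\bar y_2))\g+\Ad(\bar y_2)(1-\Ad(\bar x_2^{-1}))\g=\g$. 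Corollary~\ref{goto} only asserts existence of \emph{some} smooth point, so citing it does not give submersivity at the point handed to you by Ore's conjecture; indeed an arbitrary commutator representation of $\bar z[\bar x_1,\bar y_1]^{-1}$ need not be a smooth point of the commutator map. This is precisely the technical heart of the paper's proof, and it is simply absent from your argument.

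The paper closes this gap by \emph{choosing} the smooth point first: it picks $(\bar x,\bar y)$ so that $\bar x,\bar y$ generate every quasisimple factor of $\cG(\F_\ell)$ (possible since every finite simple group is $2$-generated), and then verifies submersivity there by using the non-degenerate invariant form on $\g$ (Block--Zassenhaus, valid since $\ell>5$) together with the irreducibility of the adjoint representation in this range of $\ell$ (Vasiu): any $a$ killed by the relevant linear conditions would be fixed by $\Ad(\bar x)$ and $\Ad(\bar y)$, contradicting generation. With that in hand the argument is a single Hensel lift: given $z$, write $\bar z[\bar x,\bar y]^{-1}=[\bar u,\bar v]$ by LOST (note LOST applies directly to quasisimple groups, so passing through $\bar G/Z(\bar G)$ and correcting a central element, as you do, is an unnecessary detour that costs you an extra commutator at the residue level), lift $u,v$ arbitrarily, and then lift $(\bar x,\bar y)$ to a commutator representation of $z[u,v]^{-1}$ by Hensel at the smooth point. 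No Lazard correspondence or uniform pro-$\ell$ theory is needed, and indeed your Step~2 becomes superfluous once the right smooth point is chosen: without submersivity at the residue pair, Step~2 only yields three commutators; with it, you don't need Step~2 at all. To repair your proof you should replace the appeal to Corollary~\ref{goto} by an explicit verification of submersivity at a generating residue pair along the lines above, and then the Lazard step can be discarded.
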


\begin{proof}
As $\Gamma_\ell$ is hyperspecial, it is of the form $\cG(\Z_\ell)$, where $\cG$ is a smooth affine group scheme
over $\Z_\ell$ with connected, simply connected, semisimple fibers.  In particular, as $\ell\ge 5$, $\cG(\F_\ell)$ is a product of quasisimple groups.

Let $\g$ denote the Lie algebra of $\cG_{\F_\ell}$, the
closed fiber of $\cG$.  Let $\Ad$ denote the adjoint representation of $\cG(\F_\ell)$ acting on $\g$.
We consider the commutator map $(x,y)\mapsto [x,y]=xyx^{-1}y^{-1}$ as a morphism $\cG^2\to \cG$.
Let $\bar x,\bar y\in \cG(\F_\ell)$.  Identifying the tangent space $T_{(\bar x,\bar y)} \cG_{\F_\ell}^2$
(resp.\ $T_{[\bar x,\bar y]}\cG_{\F_\ell}$)  with  $\g^2$ (resp.\ $\g$) 
via right-translation by 
$(\bar x,\bar y)$ (resp.\ $[\bar x,\bar y]$), the map on tangent spaces induced by the commutator map at $(\bar x,\bar y)$
is given by
$$(X,Y)\mapsto (\Ad(\bar x)-\Ad(\bar x\bar y)) X  + (\Ad(\bar x\bar y) - \Ad(\bar x\bar y\bar x^{-1})) Y.$$
If
\begin{equation}
\label{smooth}
(1- \Ad(\bar y))\g + \Ad(\bar y)(1-\Ad(\bar x^{-1}))\g  = \g,
\end{equation}
then by Hensel's lemma, every $z\in \cG(\Z_\ell)$ which is congruent to $[x,y]$ (mod $\ell$) is of the form $[x',y']$
for $x'$ and $y'$ congruent to $x$ and $y$ respectively. Since every element of $\cG(\F_\ell)$ ($\ell\geq 5$) is a commutator \cite{LOST}, it follows that every element in $\cG(\Z_\ell)$ is the product of
two commutators.

The hypothesis on $\ell$ guarantees that the adjoint representation
$\g$ is a semisimple representation  whose irreducible factors $\g_i$ are the simple factors of $\g$, which correspond to the almost simple factors
$G_i$ of $\cG(\F_\ell)$ \cite[Proposition~3.3, Cor.~3.7.1]{Vasiu}. Since $\ell>5$, there is a non-degenerate pairing $\langle\,,\,\rangle$ on $\g$ (see \cite{BZ}) such that if $a,b,c\in\g$, then
$$\langle [a,b],c \rangle=\langle a,[b,c]\rangle.$$
Then (\ref{smooth}) fails if and only if there exists $a\in\g$ non-zero such that
$$\langle a,\Ad(\bar y)(1-\Ad(\bar x)) b\rangle = \langle a,(1-\Ad(\bar y)) c\rangle = 0$$
for all $b,c\in\g$ or, equivalently,
$$
\langle a,\Ad(\bar y) b\rangle = \langle a,\Ad(\bar y)\Ad(\bar x) b\rangle,\ \langle a,c\rangle = \langle a,\Ad(\bar y)c\rangle,
$$
or, again,
\begin{equation}
\label{switch}
\langle \Ad(\bar y)^{-1} a,b\rangle = \langle \Ad(\bar x)^{-1}\Ad(\bar y)^{-1} a,b\rangle,\ \langle a,c\rangle = \langle \Ad(\bar y)^{-1} a,c\rangle
\end{equation}%
for all $b,c\in\g$. 
By the non-degeneracy of the pairing, the second condition in (\ref{switch}) implies $\Ad(\bar y)^{-1}$ fixes $a$, and the first condition now
implies that $\Ad(\bar x)^{-1}$ fixes $a$ as well.
In particular, if $a$ has a non-zero component $a_i$ in some simple factor $\g_i$ of $\g$, then the images of $\bar x$ and $\bar y$ in the
corresponding quasi-simple factor $G_i(\F_\ell)$ lie in the stabilizer of $a_i$, which is a proper subgroup of $G_i(\F_\ell)$ by the irreducibility of the
adjoint representation.

It is well known that every finite simple group can be generated by two elements \cite{AG}, and 
it follows that the same is true for every perfect central extension of a finite simple group.
Applying these results to the quasi-simple factors of $\cG(\F_\ell)$, we obtain elements $\bar x$ and $\bar y$ 
whose projection to each quasisimple factor $G_i(\F_\ell)$ generates $G_i(\F_\ell)$.  
Thus, we have (\ref{smooth}), and the proposition follows.
\end{proof}

\begin{cor}
\label{Bounded-dim}
Let $\Gamma_{\A}$ denote a special adelic subgroup of $\prod_\ell G_\ell(\Q_\ell)$, where $\dim G_\ell$ is bounded over all $\ell$.
Then there exists a special adelic subgroup $\Gamma'_{\A}\subset \Gamma_{\A}$ such that every element
of $\Gamma'_{\A}$ is a product of two commutators of elements of $\Gamma_{\A}$.
\end{cor}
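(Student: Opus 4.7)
The plan is to leverage Theorem~\ref{Avni} at cofinitely many primes, thanks to the uniform bound $\sup_\ell\dim G_\ell<\infty$, and Corollary~\ref{goto} at the finitely many exceptional primes; the local commutator witnesses are then assembled into adelic ones using the product structure of an open subgroup of $\Gamma_{\A}$.

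By the definition of special adelic and Proposition~\ref{Adelic-criterion}, $\Gamma_{\A}$ contains an open subgroup of the form $W=\prod_{\ell\in S}\Gamma'_\ell\times\prod_{\ell\notin S}\Gamma_\ell$, where $S$ is a finite set of primes, $\Gamma'_\ell$ is open in $G_\ell(\Q_\ell)$ for $\ell\in S$, and $\Gamma_\ell$ is a hyperspecial maximal compact subgroup of $G_\ell(\Q_\ell)$ for $\ell\notin S$. Setting $M:=\sup_\ell\dim G_\ell$, I enlarge $S$ to contain every prime $\ell\le\max(5,M)$; the set $S$ remains finite. For each $\ell\in S$, Corollary~\ref{goto} applied to the open subgroup $\Gamma'_\ell$ produces an open subgroup whose intersection $V_\ell$ with $\Gamma'_\ell$ consists of products of two commutators of elements of $\Gamma'_\ell$. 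For each $\ell\notin S$, I decompose $G_\ell$ into its simply connected $\Q_\ell$-simple factors; each factor has dimension at most $M<\ell$, so Theorem~\ref{Avni} applies to the corresponding hyperspecial factor of $\Gamma_\ell$ and expresses every element as a product of two commutators. Coordinatewise assembly across the finitely many factors yields the same for $\Gamma_\ell$, and I set $V_\ell:=\Gamma_\ell$.

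Define $\Gamma'_{\A}:=\prod_\ell V_\ell$. By construction this is special adelic (open for $\ell\in S$, hyperspecial for $\ell\notin S$) and contained in $W\subset\Gamma_{\A}$. For any $g=(g_\ell)\in\Gamma'_{\A}$, choose at each prime $\ell$ witnesses $x_\ell,y_\ell,u_\ell,v_\ell$ in the appropriate factor of $W$ such that $g_\ell=[x_\ell,y_\ell][u_\ell,v_\ell]$. The assembled tuples $x:=(x_\ell)$, $y:=(y_\ell)$, $u:=(u_\ell)$, $v:=(v_\ell)$ lie in $W\subset\Gamma_{\A}$, and $g=[x,y][u,v]$ in $\Gamma_{\A}$ as required. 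The chief difficulty is this global assembly, since Theorem~\ref{Avni} and Corollary~\ref{goto} supply only prime-by-prime commutator representations; it is resolved by taking $\Gamma'_{\A}$ to be a direct product rather than a mere closed subgroup of $\Gamma_{\A}$, so that local witnesses glue without cross-prime interaction.
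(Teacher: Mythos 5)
Your argument is correct and follows exactly the route the paper intends: the paper's own proof is the one-line statement that the corollary is an immediate consequence of Corollary~\ref{goto} and Theorem~\ref{Avni}, and you have supplied precisely the details being elided (choosing the open product subgroup $W$, enlarging $S$ so that Theorem~\ref{Avni} applies at all $\ell\notin S$, and gluing local witnesses coordinatewise). One small note: the hypothesis of Theorem~\ref{Avni} is ``simpl(y) connected semisimple,'' not $\Q_\ell$-simple, so the step of decomposing $G_\ell$ into $\Q_\ell$-simple factors is harmless but unnecessary.
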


\begin{proof}
This is an immediate consequence of Corollary~\ref{goto} and Theorem~\ref{Avni}.
\end{proof}

Let $\Gamma_\ell\subset G_\ell(\Q_\ell)$ be a compact subgroup, where $G_\ell/\Q_\ell$ is connected reductive.
Denote by $\Gamma_\ell^{\ss}$ the image of $\Gamma_\ell$ under $G_\ell(\Q_\ell)\to G_\ell/Z_\ell(\Q_\ell)$, where $Z_\ell$ is the identity component of the center of $G_\ell$. Denote by $\Gamma_\ell^{\sc}$ the pre-image of $\Gamma_\ell^{\ss}$ under $G_\ell^{\sc}(\Q_\ell)\to G_\ell/Z_\ell(\Q_\ell)$. These constructions appear in \cite{HL},\cite{HL15}.

\begin{cor}\label{equal}
The subgroup $U_\ell$ of $G_\ell^{\sc}(\Q_\ell)$ generated by $\kappa_\ell(\Gamma_\ell,\Gamma_\ell)$ is contained in $\Gamma_\ell^{\sc}$.
If $\Gamma_\ell^{\sc}\subset G_\ell^{\sc}(\Q_\ell)$ is hyperspecial maximal compact and $\ell>\max\{5,\dim G_\ell^{\sc}\}$, 
then $\Gamma_\ell^{\sc}\subset \kappa_\ell(\Gamma_\ell,\Gamma_\ell)^2$. Hence, $\Gamma_\ell^{\sc}=U_\ell$.
\end{cor}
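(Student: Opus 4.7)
The plan is to prove the two inclusions $U_\ell \subset \Gamma_\ell^{\sc}$ and $\Gamma_\ell^{\sc} \subset \kappa_\ell(\Gamma_\ell,\Gamma_\ell)^2$; combined with the obvious $\kappa_\ell(\Gamma_\ell,\Gamma_\ell)^2 \subset U_\ell$, these yield both assertions of the corollary.

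For the first inclusion, $\Gamma_\ell^{\sc}$ is a subgroup of $G_\ell^{\sc}(\Q_\ell)$ (being the preimage of the subgroup $\Gamma_\ell^{\ss}$ under the group homomorphism $G_\ell^{\sc}(\Q_\ell) \to (G_\ell/Z_\ell)(\Q_\ell)$), so it suffices to check that $\kappa_\ell(\Gamma_\ell,\Gamma_\ell) \subset \Gamma_\ell^{\sc}$. Post-composing $\kappa_\ell$ with the projection $G_\ell^{\sc} \to G_\ell/Z_\ell$ and using the defining identity $\kappa_\ell(\pi_\ell(x_1)z_1,\pi_\ell(x_2)z_2) = [x_1,x_2]$ together with the centrality of $z_1,z_2$, one sees that the resulting composite is simply the ordinary commutator map $G_\ell \times G_\ell \to G_\ell \to G_\ell/Z_\ell$. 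Hence for any $g_1,g_2 \in \Gamma_\ell$ the element $\kappa_\ell(g_1,g_2)$ projects to the class of $[g_1,g_2] \in \Gamma_\ell$ in $G_\ell/Z_\ell$, which lies in $\Gamma_\ell^{\ss}$, and so $\kappa_\ell(g_1,g_2) \in \Gamma_\ell^{\sc}$.

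For the second inclusion, I would apply Theorem \ref{Avni} to the simply connected semisimple group $G_\ell^{\sc}$ and its hyperspecial maximal compact subgroup $\Gamma_\ell^{\sc}$: the hypothesis $\ell > \max\{5,\dim G_\ell^{\sc}\}$ guarantees that every $y \in \Gamma_\ell^{\sc}$ can be written as $y = [x_1,x_2][x_3,x_4]$ with each $x_i \in \Gamma_\ell^{\sc}$. By the definition of $\Gamma_\ell^{\sc}$, the image of each $x_i$ in $(G_\ell/Z_\ell)(\Q_\ell)$ lies in $\Gamma_\ell^{\ss}$, so one chooses $g_i \in \Gamma_\ell$ with the same image. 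Then $g_i\pi_\ell(x_i)^{-1}$ lies in $G_\ell(\Q_\ell)$ and maps to the identity in $(G_\ell/Z_\ell)(\Q_\ell)$, so it equals some $z_i \in Z_\ell(\Q_\ell)$; writing $g_i = \pi_\ell(x_i) z_i$, the defining identity of $\kappa_\ell$ yields $\kappa_\ell(g_{2j-1},g_{2j}) = [x_{2j-1},x_{2j}]$ for $j=1,2$, whence $y = \kappa_\ell(g_1,g_2)\kappa_\ell(g_3,g_4) \in \kappa_\ell(\Gamma_\ell,\Gamma_\ell)^2$. The one technical point worth emphasizing is that the central correction factors $z_i$ live in $Z_\ell(\Q_\ell)$ rather than merely $Z_\ell(\bar{\Q}_\ell)$, because they are identified with elements of $G_\ell(\Q_\ell)$ in the kernel of $G_\ell(\Q_\ell) \to (G_\ell/Z_\ell)(\Q_\ell)$; this allows the $\bar{\Q}_\ell$-defining identity of $\kappa_\ell$ to be used directly on $\Q_\ell$-rational lifts with no Galois-cohomological detour, and no deeper obstacle arises.
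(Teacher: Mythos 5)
Your proof is correct and follows essentially the same route as the paper's: the first inclusion is proved by observing that $\kappa_\ell$ projects to the ordinary commutator in $G_\ell/Z_\ell$ (the paper goes through $\pi_\ell(U_\ell)\subset\Gamma_\ell\cap G_\ell^{\der}(\Q_\ell)$, which amounts to the same thing), and the second inclusion invokes Theorem \ref{Avni} and then lifts commutators in $\Gamma_\ell^{\sc}$ to elements of $\kappa_\ell(\Gamma_\ell,\Gamma_\ell)$ using the defining identity of $\kappa_\ell$. The paper simply states this last step (``every commutator of $\Gamma_\ell^{\sc}$ belongs to $\kappa_\ell(\Gamma_\ell,\Gamma_\ell)$ by the definitions'') while you spell out the lift $g_i=\pi_\ell(x_i)z_i$ with $z_i\in Z_\ell(\Q_\ell)$ explicitly, which is a welcome clarification but not a different argument.
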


\begin{proof}
Let $\pi_\ell:G_\ell^{\sc}\to G_\ell^{\der}$ be the covering isogeny.  By the definition of $\kappa_\ell$, we obtain 
$$\pi_\ell(U_\ell)\subset\Gamma_\ell\cap G_\ell^{\der}(\Q_\ell)\subset\Gamma_\ell.$$ 
This implies $U_\ell\subset \Gamma_\ell^{\sc}$ by the definition of $\Gamma_\ell^{\sc}$.
If $\Gamma_\ell^{\sc}\subset G_\ell^{\sc}(\Q_\ell)$ is hyperspecial maximal compact and $\ell>\max\{5,\dim G_\ell^{\sc}\}$,
then every element of $\Gamma_\ell^{\sc}$ is a product of two commutators by Theorem \ref{Avni}.
Since every commutator of $\Gamma_\ell^{\sc}$ belongs to $\kappa_\ell(\Gamma_\ell,\Gamma_\ell)$ by the definitions of $\Gamma_\ell^{\sc}$ and $\kappa_\ell$, we obtain
$$\Gamma_\ell^{\sc}\subset\kappa_\ell(\Gamma_\ell,\Gamma_\ell)^2\subset U_\ell.$$
Hence, $\Gamma_\ell^{\sc}=U_\ell$.
\end{proof}

\begin{remark}\label{rem-con}
Suppose $\Gamma_\ell$ and $G_\ell$ are respectively the image and the algebraic monodromy group of the representation $V_\ell$ in $\mathsection1$. The second author has conjectured that $\Gamma_\ell^{\sc}\subset G_\ell^{\sc}(\Q_\ell)$ is hyperspecial maximal compact for $\ell\gg1$ \cite{Larsen}.
By Corollary \ref{equal}, this implies $\kappa_\ell(\Gamma_\ell,\Gamma_\ell)$ generates a hyperspecial maximal compact subgroup of $ G_\ell^{\sc}(\Q_\ell)$ for $\ell\gg1$.
\end{remark}

\begin{thm}
\label{Generation}
Let $\Gamma_{\A}\subset \prod_\ell G_\ell(\Q_\ell)$ be a compact subgroup, where $\dim G_\ell^{\sc}$ is bounded by $N$.  
Suppose that for all $\ell$, $\pr_\ell(\Gamma_{\A})$ is Zariski-dense in $G_\ell$.
Suppose further that there exist a finite set of primes $S$ and a collection of compact open subgroups $\Gamma_\ell\subset G_\ell(\Q_\ell)$
for all $\ell$ such that 
\begin{enumerate}
\item[(i)] $\prod_{\ell}\Gamma_\ell\subset \Gamma_{\A};$
\item[(ii)] $\Gamma_\ell^{\sc}\subset G_\ell^{\sc}(\Q_\ell)$ is hyperspecial maximal compact for all $\ell\notin S$.
\end{enumerate}  
Then $\kappa(\Gamma_{\A},\Gamma_{\A})$ generates a special adelic subgroup
of $\prod_\ell G_\ell^{\sc}(\Q_\ell)$, which is equal to $\kappa(\Gamma_{\A},\Gamma_{\A})^k$ for some $k$. Moreover, $\kappa(\Gamma_{\A},\Gamma_{\A})^{2}$ contains a special adelic subgroup of $\prod_\ell G_\ell^{\sc}(\Q_\ell)$.
\end{thm}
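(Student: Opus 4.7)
My plan is to first establish the containment statement (that $\kappa(\Gamma_{\A},\Gamma_{\A})^{2}$ contains a special adelic subgroup) and then bootstrap it, via a compactness argument, to the first two claims. Set $S' := S \cup \{\ell : \ell \leq \max(5,N)\}$, a finite set of primes.

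For $\ell \notin S'$, hypothesis (ii) combined with Corollary \ref{equal} yields $\Gamma_\ell^{\sc} = \kappa_\ell(\Gamma_\ell,\Gamma_\ell)^{2}$, and this is hyperspecial maximal compact in $G_\ell^{\sc}(\Q_\ell)$. For $\ell \in S'$ (finitely many primes), I would prove a variant of Corollary \ref{goto} for $\kappa_\ell$ in place of the bare commutator: the morphism $\kappa_\ell \colon G_\ell \times G_\ell \to G_\ell^{\sc}$ is dominant, since its composition with the central isogeny $\pi_\ell$ is the commutator map (dominant by Borel). Hence $\kappa_\ell$ is generically smooth, and the $\ell$-adic implicit function theorem at a smooth point $(x,y) \in \Gamma_\ell \times \Gamma_\ell$ shows that $\kappa_\ell(x,y)$ is interior to $\kappa_\ell(\Gamma_\ell,\Gamma_\ell)$; combining with $\kappa_\ell(y,x) = \kappa_\ell(x,y)^{-1}$ produces an open neighborhood $V_\ell$ of the identity in $G_\ell^{\sc}(\Q_\ell)$ contained in $\kappa_\ell(\Gamma_\ell,\Gamma_\ell)^{2}$. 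Shrink to an open subgroup $W_\ell \subset V_\ell$, and set
$$U := \prod_{\ell \in S'} W_\ell \times \prod_{\ell \notin S'} \Gamma_\ell^{\sc},$$
a special adelic subgroup. Given $(u_\ell) \in U$, each coordinate factors as $u_\ell = \kappa_\ell(a_\ell,b_\ell)\,\kappa_\ell(c_\ell,d_\ell)$ with $a_\ell,b_\ell,c_\ell,d_\ell \in \Gamma_\ell$. Hypothesis (i) then lets us assemble $a := (a_\ell) \in \prod_\ell \Gamma_\ell \subset \Gamma_{\A}$ (and likewise $b,c,d$), giving $(u_\ell) = \kappa(a,b)\kappa(c,d)$ and hence $U \subset \kappa(\Gamma_{\A},\Gamma_{\A})^{2}$.

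For the first two claims, let $\Delta_{\A} := \langle\kappa(\Gamma_{\A},\Gamma_{\A})\rangle$. Its projection to each factor equals the subgroup $\Delta_\ell$ generated by $\kappa_\ell(\pr_\ell\Gamma_{\A},\pr_\ell\Gamma_{\A})$, which is compact open in $G_\ell^{\sc}(\Q_\ell)$ by Proposition \ref{3.1} together with Zariski-density of $\pr_\ell\Gamma_{\A}$ in $G_\ell$ (hence in $G_\ell^{\ad}$). Since $\pi_\ell$ is finite and hence proper, $\pi_\ell^{-1}(\pr_\ell\Gamma_{\A})$ is compact for each $\ell$; applying $\pi_\ell$ coordinate-wise to any $\delta \in \Delta_{\A}$ gives a product of commutators lying in the subgroup $\pr_\ell\Gamma_{\A}$, so $\Delta_{\A} \subset \prod_\ell \pi_\ell^{-1}(\pr_\ell\Gamma_{\A})$, which is compact by Tychonoff. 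Thus $\overline{\Delta_{\A}}$ is compact, $U \subset \Delta_{\A} \subset \overline{\Delta_{\A}}$ is open, and $\overline{\Delta_{\A}}/U$ is compact and discrete, hence finite. Consequently $\Delta_{\A}$ is a union of finitely many $U$-cosets, $\Delta_{\A} = \bigcup_{i=1}^{r} U g_i$, with each $g_i$ a product of at most $m_i$ $\kappa$-values. Setting $k := 2 + \max_i m_i$ and using that $1 \in \kappa(\Gamma_{\A},\Gamma_{\A})$ (so shorter products embed in longer ones), we obtain
$$\Delta_{\A} \;\subset\; U \cdot \{g_1,\dots,g_r\} \;\subset\; \kappa(\Gamma_{\A},\Gamma_{\A})^{k}.$$
The reverse inclusion is automatic, so $\Delta_{\A} = \kappa(\Gamma_{\A},\Gamma_{\A})^{k}$ is a continuous image of the compact set $\Gamma_{\A}^{2k}$ and is therefore itself compact. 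Openness of each $\pr_\ell\Delta_{\A}=\Delta_\ell$ together with the inclusion $\Delta_{\A} \supset U$ then lets Proposition \ref{Adelic-criterion} conclude that $\Delta_{\A}$ is special adelic.

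The main obstacle is the local-to-global assembly of $\kappa$-decompositions: we must know that the various local choices $(a_\ell,b_\ell,c_\ell,d_\ell)$ can be realized as coordinates of common elements of $\Gamma_{\A}$, and this is precisely what hypothesis (i) provides. A smaller but still delicate point is securing compactness of $\Delta_{\A}$; treating the closure $\overline{\Delta_{\A}}$ first via the Tychonoff argument on $\prod_\ell \pi_\ell^{-1}(\pr_\ell\Gamma_{\A})$ and only afterwards forcing the equality $\Delta_{\A} = \kappa(\Gamma_{\A},\Gamma_{\A})^{k}$ avoids what would otherwise be a circular dependence on the bound $k$.
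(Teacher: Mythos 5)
Your proof is correct and uses the same building blocks as the paper's argument (Proposition~\ref{3.1}, Corollary~\ref{equal}, and a Got\^o-type implicit function theorem argument), but organizes them in the opposite order and at a different level. The paper first establishes the ``equal to $\kappa(\Gamma_\A,\Gamma_\A)^k$'' claim by reducing to the direct product case $\Gamma_\A=\prod_\ell\Gamma_\ell$ via hypothesis (i), for which $\kappa(\Gamma_\A,\Gamma_\A)^k=\prod_\ell\kappa_\ell(\Gamma_\ell,\Gamma_\ell)^k$ and everything is factor-by-factor, and then deduces the ``$\kappa^2$ contains a special adelic subgroup'' claim by applying Corollary~\ref{Bounded-dim} to the already-constructed $U_\A\subset\prod_\ell G_\ell^{\sc}(\Q_\ell)$ and pulling back through $\pi_\A$. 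You instead prove the containment statement first, working locally with $\kappa_\ell\colon G_\ell\times G_\ell\to G_\ell^\sc$ rather than with the commutator on $G_\ell^\sc$: your dominance argument (that $\pi_\ell\circ\kappa_\ell$ is the commutator into $G_\ell^{\der}$, dominant by Borel, and $\pi_\ell$ finite forces $\kappa_\ell$ dominant) is a clean variant of Corollary~\ref{goto} that the paper does not state and that lets you bypass Corollary~\ref{Bounded-dim}. You then obtain the ``$=\kappa^k$'' claim by a compactness and finite-cosets argument rather than the paper's factor-by-factor bound; this also handles the general $\Gamma_\A$ directly, without the paper's (somewhat terse) reduction to the direct-product case.

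One step you leave implicit and should spell out is that $U$ is open in $\overline{\Delta_\A}$. The Tychonoff bound $\overline{\Delta_\A}\subset\prod_\ell\pi_\ell^{-1}(\pr_\ell\Gamma_\A)$ by itself does not give this, since $U$ is not obviously open in that ambient group. What rescues it is hypothesis (ii): for $\ell\notin S'$, the compact group $\pi_\ell^{-1}(\pr_\ell\Gamma_\A)$ contains $\pr_\ell U=\Gamma_\ell^{\sc}$, and $\Gamma_\ell^{\sc}$ is \emph{maximal} compact in $G_\ell^{\sc}(\Q_\ell)$, so in fact $\pi_\ell^{-1}(\pr_\ell\Gamma_\A)=\Gamma_\ell^{\sc}$ for $\ell\notin S'$. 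With that observation the ambient product coincides with $U$ outside $S'$, $U$ is open in it, hence open in $\overline{\Delta_\A}$, and the rest of your finite-index argument goes through.
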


\begin{proof} Since $\pr_\ell(\Gamma_{\A})$ is compact and Zariski-dense in $G_\ell$, 
$\kappa_\ell(\pr_\ell(\Gamma_{\A}),\pr_\ell(\Gamma_{\A}))$ generates 
a compact and open subgroup $U_\ell$ of $G_\ell^{\sc}(\Q_\ell)$ for all $\ell$  by Proposition \ref{3.1}. 
Hence, $\kappa(\Gamma_{\A},\Gamma_{\A})$ generates a subgroup $U_\A$ of $\prod_\ell U_\ell$, a direct product of compact groups.
Since (i) implies
$$\kappa(\prod_\ell\Gamma_\ell,\prod_\ell\Gamma_\ell)\subset \kappa(\Gamma_\A,\Gamma_\A)\subset \prod_\ell U_\ell,$$
it suffices to prove the theorem  assuming $\Gamma_\A=\prod_\ell\Gamma_\ell$.
Hence, we obtain for all $k\in\mathbb{N}$ that
\begin{equation}\label{product-eqt}
\prod_\ell\kappa_\ell(\Gamma_\ell,\Gamma_\ell)^k=\kappa(\Gamma_\A,\Gamma_\A)^k.
\end{equation}

Suppose $S$ is large enough to contain the primes that are not greater than $\mathrm{max}\{5,N\}$.
If $\ell\notin S$, then $\Gamma_\ell^{\sc}$ is a hyperspecial maximal compact subgroup of $G_\ell^{\sc}(\Q_\ell)$ by (ii).
By Corollary \ref{equal}, 
 $\Gamma_\ell^{\sc}\subset\kappa_\ell(\Gamma_\ell,\Gamma_\ell)^2$ for $\ell\notin S$.
Since $S$ is finite, by Proposition \ref{3.1}, there exists $k\ge 2$ such that $\kappa_\ell(\Gamma_\ell,\Gamma_\ell)^k$ is a
compact open subgroup $\Delta_\ell$ of $G_\ell^{\sc}(\Q_\ell)$ for all $\ell\in S$.
We conclude by (\ref{product-eqt}) that 
$$U_\A:=\kappa(\Gamma_\A,\Gamma_\A)^k=\prod_{\ell\in S}\Delta_\ell\times\prod_{\ell\notin S}\Gamma_\ell^{\sc}$$ 
is special adelic.

By Corollary \ref{Bounded-dim}, there exists a special adelic subgroup $U_\A'\subset U_\A$ such that every element 
 $x\in U_\A'$ is a product of two commutators of $U_\A$, i.e, there exist $y,z,y',z'\in U_\A$ such that
$$x=[y,z][y',z'].$$
Since $\pi_\A:\prod_\ell G_\ell^{\sc}(\Q_\ell)\to \prod_\ell G_\ell^{\der}(\Q_\ell)$ maps $U_\A$ into $\Gamma_\A\cap \prod_\ell G_\ell^{\der}(\Q_\ell)$, $x=[y,z][y',z']\in\kappa(\Gamma_\A,\Gamma_\A)^2$ by the definition of $\kappa$.
Therefore, $\kappa(\Gamma_\A,\Gamma_\A)^2$ contains the special adelic subgroup $U_\A'$.
\end{proof}

\begin{remark}
Let $\Gamma_\A:=\Phi_\A(G_K)\subset\prod_\ell G_\ell(\Q_\ell)$ be the adelic image.
Then (i) in Theorem \ref{Generation} always holds by Serre \cite{Serre-AI} while 
 (ii) is Larsen's conjecture \cite{Larsen} (see Remark \ref{rem-con}).
Hence, Larsen's conjecture implies Conjecture \ref{main} by Theorem \ref{Generation}.
\end{remark}

\section{Galois representations}
Recall the assumption that the algebraic monodromy group $G_\ell$ of our
$\ell$-adic representation $\Phi_\ell:\Gal_K\to\Aut(V_\ell)\cong\GL_n(\Q_\ell)$ is connected reductive for all $\ell$.

\begin{thm}
With notations as in the introduction, Conjecture~\ref{main} holds if either of the following statements holds:
\begin{itemize}
\item[(a)] For all $\ell$, $G^{\sc}_\ell$ is a product of type A simple factors;
\item[(b)] For some $\ell$, $G^{\sc}_\ell\times_{\Q_\ell}\bar\Q_\ell$ is a product of simple factors $\SL_{r+1}$ of rank $r=4$, $r=6$, or $r\ge9$ with at most one rank $4$ factor.
\end{itemize}
\end{thm}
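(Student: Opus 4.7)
The proof of both (a) and (b) reduces, via Theorem \ref{Generation} applied to the adelic image $\Gamma_\A := \Phi_\A(G_K) \subset \prod_\ell G_\ell(\Q_\ell)$, to verifying hypothesis (ii) of that theorem---that $\Gamma_\ell^{\sc}$ is a hyperspecial maximal compact subgroup of $G_\ell^{\sc}(\Q_\ell)$ for all but finitely many $\ell$. Hypothesis (i) of Theorem \ref{Generation} is Serre's almost independence theorem \cite{Serre-AI}, the Zariski-density of $\pr_\ell(\Gamma_\A)$ in $G_\ell$ holds by the very definition of $G_\ell$, and the uniform bound $\dim G_\ell^{\sc} \leq n^2$ is immediate from $G_\ell \hookrightarrow \GL(V_\ell)$. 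Thus (ii) is the only real content, and it is precisely Larsen's conjecture (Remark \ref{rem-con}) for the compatible system.

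Case (a) is then immediate from the authors' earlier work \cite{HL}, where Larsen's conjecture is established for compatible systems of type A: under the hypothesis of (a), $\Gamma_\ell^{\sc} \subset G_\ell^{\sc}(\Q_\ell)$ is hyperspecial maximal compact for $\ell \gg 1$, so Theorem \ref{Generation} yields Conjecture \ref{main}.

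For case (b) the hypothesis is given only at a single prime $\ell_0$, so the first step is to propagate it to all primes. This rests on the $\ell$-independence of the formal character of a geometric compatible system: Frobenius characteristic polynomials at primes of good reduction are $\Q$-rational and independent of $\ell$, and standard Serre-type arguments then show that the isomorphism class of $G_\ell^{\sc} \times_{\Q_\ell} \bar\Q_\ell$, together with the restriction of $V_\ell \otimes \bar\Q_\ell$ to each simple factor, is independent of $\ell$. Consequently every $G_\ell^{\sc}$ is a product of $\SL_{r+1}$-factors with $r \in \{4,6\} \cup \{k : k \geq 9\}$ and at most one rank-$4$ factor, so in particular the system is type A. The specific rank restrictions are chosen so that dimension/formal-character bookkeeping forces the representation of each simple factor on $V_\ell$ to be isomorphic to a sum of copies of the standard representation and its dual: for $r \geq 9$ the exterior and symmetric powers and the adjoint are already excluded by dimension, while for $r = 4,6$ a direct combinatorial check disposes of the remaining exotic possibilities, the ``at most one rank-$4$ factor'' clause being needed to prevent two $\SL_5$-factors from jointly supporting a coincidental formal character. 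With this rigidity in place, the type A argument of \cite{HL} applies to yield (ii).

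The main obstacle is the rank-specific representation-theoretic analysis in case (b). One must verify, one allowed rank at a time and for each possible tensor-product decomposition, that the formal character of $V_\ell$ compels the standard/dual-standard form. The tightest point is the single-rank-$4$ restriction, which reflects a genuine numerical coincidence between formal characters of distinct configurations of $\SL_5$-factors and is precisely where the hypothesis of the theorem is sharpest.
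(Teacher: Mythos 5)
You correctly identify the reduction to Theorem~\ref{Generation}, verify hypothesis (i) via Serre's almost linear disjointness, note the Zariski-density of $\pr_\ell(\Gamma_\A)$ and the uniform bound on $\dim G_\ell^{\sc}$, and dispose of case (a) by citing the main result of \cite{HL} --- all exactly as the paper does.

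Where the proposal diverges is case (b). The paper handles it in a single sentence by citing \cite[Theorem~3.21]{Hu} to show that (b) implies (a), after which case (a) applies. You instead attempt to re-derive that implication, and the sketch contains a genuine gap. The assertion that ``standard Serre-type arguments then show that the isomorphism class of $G_\ell^{\sc}\times_{\Q_\ell}\bar\Q_\ell$ \dots is independent of $\ell$'' is not a standard fact. What is $\ell$-independent (by Serre) is the formal character of $G_\ell$ on $V_\ell$, and hence the rank; but formal character does not in general determine the Dynkin type --- which is precisely why the rank restrictions in (b) exist. The $\ell$-independence of the Dynkin type is the nontrivial conclusion of Hui's theorem, proved via ``equal-rank subalgebra equivalence,'' and the rank restrictions are what rule out coincidences of formal character between \emph{distinct root systems}, not merely, as you suggest, between distinct representations of a fixed $\mathfrak{sl}_{r+1}$. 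So the $\ell$-independence you assert should be a \emph{consequence} of the bookkeeping you mention, not a preliminary to it. Moreover your heuristics for why the allowed ranks are $4$, $6$, $\ge 9$ with at most one rank-$4$ factor (exterior/symmetric powers excluded by dimension; two $\SL_5$-factors jointly producing a coincidental formal character) are speculative and do not track the actual case analysis in \cite{Hu}. In short, propagating the hypothesis from one prime to all primes is the real content of case (b), and the correct move is to invoke the theorem that does it rather than to claim it follows from standard arguments.
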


\begin{proof} Let $\Gamma_{\A}:=\Phi_{\A}(G_K)\subset \prod_{\ell}G_\ell(\Q_\ell)$.   The dimension of $G_\ell^{\sc}$ is bounded in terms of the dimension of $V_\ell$ which is independent of $\ell$, and $\pr_\ell(\Gamma_{\A})$ is compact and Zariski dense in $G_\ell$. It suffices to verify the conditions (i) and (ii) of Theorem~\ref{Generation} for a finite index subgroup of $\Gamma_{\A}$. The fields defined by $\ker\Phi_\ell$ are almost linearly disjoint by Serre \cite{Serre-AI}, so there exists a finite extension $K'$ of $K$
such that
$$\prod_{\ell}\Gamma_\ell:=\prod_\ell\Phi_\ell(G_{K'})=\Phi_\A(G_{K'})\subset \Gamma_\A.$$
Therefore, Theorem \ref{Generation}(i) always holds for $\Gamma_\A$.

Since  (b) implies  (a) by \cite[Theorem 3.21]{Hu}, the main result of \cite{HL} implies 
$\Gamma_\ell^{\sc}$ is a hyperspecial maximal compact Lie subgroups  of $G_\ell^{\sc}(\Q_\ell)$ for all sufficiently large $\ell$.
Hence, the condition \ref{Generation}(ii) holds for (a) and (b).
 \end{proof}

Conjecture~\ref{main} also holds for abelian varieties, as we will prove in a later paper.

\section{Connections with maximal motives}
We follow closely \cite{Serre-GM} in this section. Assume the standard conjectures of algebraic cycles \cite{Groth,Kle} and the Hodge conjecture, denote the category of (pure) motives over number field $K$ by $\mathcal{M}$. The category $\mathcal{M}$ is a semisimple neutral Tannakian category over $\Q$. Let $E\in\mathrm{ob}(\mathcal{M})$ be a motive, $\mathcal{M}(E)$ is the smallest full Tannakian subcategory of $\mathcal{M}$ containing $E$. If $E'$ is an object of $\mathcal{M}(E)$, then we say that $E'$ is dominated by $E$, denoted by $E'\prec E$. Fix an embedding $\sigma:K\rightarrow\C$, there exists a fiber functor from $\mathcal{M}$ to the category of finite dimensional $\Q$-vector space
\begin{equation*}
h_\sigma:\mathcal{M}\rightarrow \mathrm{Vect}_{\Q},
\end{equation*}
which is an exact faithful $\Q$-linear tensor functor. The scheme of automorphisms $G_{\mathcal{M}}$ of $h_\sigma$ is called the motivic Galois group over $K$. It is a projective limit of $\Q$-reductive groups $G_{\mathcal{M}(E)}$ relative to dominance of motives $E$, where $G_{\mathcal{M}(E)}$ is the scheme of automorphisms of the restriction of $h_\sigma$ to $\mathcal{M}(E)$. The category $\mathcal{M}$ is equivalent to $\mathrm{Rep}_\Q G_{\mathcal{M}}$, the category of finite dimensional $\Q$-linear representation of $G_{\mathcal{M}}$.

Suppose $E$ is a motive over $K$. The $\ell$-adic cohomology of $E$ (over $\bar{K}$) induces an $\ell$-adic Galois representation
\begin{equation*}
\Phi_{\ell,E}:G_K\rightarrow \mathrm{GL}(h_\sigma(E)\otimes\Q_\ell)
\end{equation*}
and the $\ell$-adic image is contained in $G_{\mathcal{M}(E)}(\Q_\ell)$. 
Let $L$ be an lattice of $h_\sigma(E)$.
Since $L\otimes \Z_\ell$ is stable under the action of $\Phi_{\ell,E}$ for almost all primes $\ell$ \cite[$\mathsection10$]{Serre-GM}, we obtain by consolidating representations $\Phi_{\ell,E}$ for different $\ell$ an adelic representation
\begin{equation*}
\Phi_E: G_K\rightarrow G_{\mathcal{M}(E)}(\A_f).
\end{equation*}

Suppose $G_{\mathcal{M}(E)}$ is connected, then $G_{\mathcal{M}}^{\circ}\rightarrow G_{\mathcal{M}(E)}$ is surjective. The motive $E$ is said to be \textit{maximal} if whenever $G'$ is connected reductive and $G'\rightarrow G_{\mathcal{M}(E)}$ is a non-trivial isogeny, then the homomorphism $G_{\mathcal{M}}^\circ \rightarrow G_{\mathcal{M}(E)}$ does not factor through $G'\rightarrow G_{\mathcal{M}(E)}$ \cite[11.2]{Serre-GM}. Serre has conjectured the following.

\begin{conj}\label{maximal}\cite[11.4]{Serre-GM}
Suppose $G_{\mathcal{M}(E)}$ is connected. The following two properties are equivalent:
\begin{itemize}
\item[(i)] $E$ is maximal.
\item[(ii)] $\mathrm{Im}(\Phi_E)$ is open in the adelic group $G_{\mathcal{M}(E)}(\A_f)$.
\end{itemize}
\end{conj}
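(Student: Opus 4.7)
The plan is to prove Conjecture \ref{maximal} assuming the Mumford--Tate conjecture for $E$, which identifies $G_\ell^\circ$ with $G_{\mathcal{M}(E)}\times_\Q\Q_\ell$ for every $\ell$ and places the conclusion of Conjecture \ref{MT} (which then follows from Conjecture \ref{main} by Proposition \ref{coro}) directly inside $G_{\mathcal{M}(E)}(\A_f)$. With that identification in hand, I would treat the two implications of the equivalence separately.

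For (ii) $\Rightarrow$ (i), I would argue contrapositively. If $E$ is not maximal, then by definition a non-trivial connected reductive isogeny $\pi\colon G'\to G_{\mathcal{M}(E)}$ exists through which $G_{\mathcal{M}}^\circ\to G_{\mathcal{M}(E)}$ factors; by functoriality, the adelic representation $\Phi_E$ factors as $\pi\circ\Phi'$ for some continuous lift $\Phi'\colon G_K\to G'(\A_f)$, so $\mathrm{Im}(\Phi_E)\subset\pi(G'(\A_f))$. The key observation is that, although $\pi(G'(\Q_\ell))$ is open of finite index in $G_{\mathcal{M}(E)}(\Q_\ell)$ at each individual prime, the local cokernels (measured by $H^1(\Q_\ell,\ker\pi)$ via Galois cohomology of the finite multiplicative kernel of $\pi$) are non-trivial at infinitely many $\ell$. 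Taking the restricted product, $\pi(G'(\A_f))$ has infinite index in $G_{\mathcal{M}(E)}(\A_f)$, hence is not open, and neither can $\mathrm{Im}(\Phi_E)$ be.

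For (i) $\Rightarrow$ (ii), assume $E$ is maximal. Proposition \ref{coro} yields an open subgroup of $H^{\sc}(\A_f)=G_{\mathcal{M}(E)}^{\sc}(\A_f)$ inside $\kappa(\Phi_\A(G_K),\Phi_\A(G_K))^k$; pushing down through $G_{\mathcal{M}(E)}^{\sc}\to G_{\mathcal{M}(E)}^{\der}$ produces an open subgroup of $G_{\mathcal{M}(E)}^{\der}(\A_f)$ contained in $\mathrm{Im}(\Phi_E)$. The remaining task is to upgrade this to openness in the full $G_{\mathcal{M}(E)}(\A_f)$, which amounts to showing that the image of $\Phi_E$ under the abelianization map projects onto an open subgroup of $G_{\mathcal{M}(E)}^{\ab}(\A_f)$. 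Here maximality enters: if the projection were not open, its closure would lie in the image of a non-trivial finite cover of the torus $G_{\mathcal{M}(E)}^{\ab}$, and via the Tannakian dictionary this cover pulls back to a non-trivial isogeny $G'\to G_{\mathcal{M}(E)}$ through which $G_{\mathcal{M}}^\circ$ factors, contradicting the maximality of $E$.

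The hardest step is the final one in (i) $\Rightarrow$ (ii): turning a topological failure of openness in the abelian quotient into a genuine motivic isogeny that contradicts maximality. This requires running the Tannakian equivalence between connected covers of $G_{\mathcal{M}(E)}$ and specific full subcategories of $\mathcal{M}(E)$ with some care, and verifying that a finite-index failure of surjectivity in $G_{\mathcal{M}(E)}^{\ab}(\A_f)$ genuinely arises from an algebraic cover rather than merely a closed subgroup of finite index at the level of adelic points. A secondary subtlety is reconciling the unspecified exponent $k$ appearing in Conjecture \ref{MT} with the purely algebraic nature of maximality, which is indifferent to finite-index fuzziness but which demands openness in every local factor simultaneously; one must check that neither the choice of $k$ nor the remaining finite set of bad primes introduces obstructions invisible to the motivic formalism.
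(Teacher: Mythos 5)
This item is stated in the paper as a \emph{conjecture}, not a theorem. It is Serre's Conjecture~11.4 from \cite{Serre-GM}, quoted verbatim; the authors give no proof of it and explicitly treat it as open. In \S5 it enters only as a hypothesis: Theorem~\ref{connection} shows that Conjecture~\ref{maximal} together with Conjecture~\ref{dominant} and the Mumford--Tate conjecture imply Conjecture~\ref{MT}. So there is no paper proof for your attempt to be compared against.

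Beyond that mismatch, the strategy you propose is not one the paper could have used, because it is circular with respect to the paper's architecture. You invoke Proposition~\ref{coro} and Conjectures~\ref{main} and \ref{MT} to get an open subgroup of $H^{\sc}(\A_f)$ in the image, and then try to upgrade that to openness in $G_{\mathcal{M}(E)}(\A_f)$. But in the paper the logical flow runs in the opposite direction: Conjecture~\ref{maximal} (plus Conjecture~\ref{dominant} and Mumford--Tate) is used to \emph{deduce} Conjecture~\ref{MT} in Theorem~\ref{connection}, precisely because the authors do not know how to obtain adelic openness directly. Using Conjecture~\ref{MT} as an ingredient in a proof of Conjecture~\ref{maximal} therefore assumes something at least as strong as what is being proved. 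Moreover your own (i)~$\Rightarrow$~(ii) argument leaves the central step --- converting a failure of openness in the abelianized adelic image into a genuine nontrivial isogeny $G'\to G_{\mathcal{M}(E)}$ through which $G_{\mathcal{M}}^\circ$ factors --- unproved; you flag it as ``the hardest step,'' but this step is essentially the content of the conjecture itself and is exactly what the Tannakian/motivic formalism does not currently deliver. The (ii)~$\Rightarrow$~(i) direction you sketch (via the nonvanishing of $H^1(\Q_\ell,\ker\pi)$ at infinitely many $\ell$) is a reasonable heuristic consistent with Serre's discussion, but neither it nor the converse constitutes a proof, and the paper does not claim one.

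What you should do instead: do not attempt to prove Conjecture~\ref{maximal}. State it as an assumption (as the paper does), and if you want to engage with the surrounding material, prove Theorem~\ref{connection}, which is the result the paper actually establishes using Conjectures~\ref{maximal}, \ref{dominant}, and Mumford--Tate as inputs together with Proposition~\ref{coro} and Theorem~\ref{Generation}.
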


\begin{conj}\label{dominant}\cite[11.8]{Serre-GM}
By taking a finite extension of $K$, the motive $E$ is dominant by a maximal motive $E'$ such that 
$G_{\mathcal{M}(E')}\to G_{\mathcal{M}(E)}$
is a connected covering.
\end{conj}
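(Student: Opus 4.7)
The plan is to construct $E'$ by identifying, via Tannakian duality, the largest connected isogenous cover of $G_{\mathcal{M}(E)}$ through which the surjection $G_{\mathcal{M}}^\circ\twoheadrightarrow G_{\mathcal{M}(E)}$ factors, and then realizing that cover as the motivic Galois group of an object of $\mathcal{M}$. After replacing $K$ by a suitable finite extension I first arrange that $G_{\mathcal{M}} = G_{\mathcal{M}}^\circ$ is connected; since $\pi_0(G_{\mathcal{M}})$ is profinite, this is accomplished by passing to the fixed field of a sufficiently small open subgroup of $G_K$. Fix the surjection $\varphi\colon G_{\mathcal{M}}\twoheadrightarrow G_{\mathcal{M}(E)}$.

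Next I form the universal central isogeny $\widetilde{G}:= G^{\sc}_{\mathcal{M}(E)}\times Z \twoheadrightarrow G_{\mathcal{M}(E)}$, where $Z$ is the identity component of the center of $G_{\mathcal{M}(E)}$, so that every connected reductive cover of $G_{\mathcal{M}(E)}$ has the form $\widetilde{G}/F$ for a subgroup $F$ of the finite central kernel $K_0 := \ker(\widetilde{G}\to G_{\mathcal{M}(E)})$. Let $\gothF$ be the set of $F\subseteq K_0$ for which $\varphi$ lifts through $\widetilde{G}/F\to G_{\mathcal{M}(E)}$. A short diagram chase using the identification $\widetilde{G}/(F_1\cap F_2) = \widetilde{G}/F_1\times_{\widetilde{G}/(F_1F_2)}\widetilde{G}/F_2$, together with connectedness of $G_{\mathcal{M}}$ (which forces two lifts of $\varphi$ into $\widetilde{G}/(F_1F_2)$ to coincide, as they differ by a map into a finite group), shows that $\gothF$ is closed under finite intersection; since $K_0$ is finite, $\gothF$ has a minimum element $F^*$, and I set $G^*:= \widetilde{G}/F^*$. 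The induced lift $\tilde\varphi\colon G_{\mathcal{M}}\to G^*$ is surjective, because its image is a closed connected subgroup of $G^*$ projecting onto $G_{\mathcal{M}(E)}$ and thus differs from $G^*$ by at most the finite central subgroup $K_0/F^*$. Choosing any faithful finite-dimensional representation $\rho\colon G^*\to \GL(V)$ (available since $G^*$ is reductive), the pullback $\rho\circ\tilde\varphi$ corresponds by Tannakian duality to an object $E'\in\mathcal{M}$ with $G_{\mathcal{M}(E')} = G^*$. The isogeny $G_{\mathcal{M}(E')}\to G_{\mathcal{M}(E)}$ is then the required connected covering; the inclusion $\mathcal{M}(E)\subseteq \mathcal{M}(E')$ yields $E\prec E'$; and maximality of $E'$ is inherited from the minimality of $F^*$, since any further non-trivial isogeny $G'\twoheadrightarrow G_{\mathcal{M}(E')}$ through which $\varphi$ factored would correspond to a proper subgroup of $F^*$ still belonging to $\gothF$, a contradiction.

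The main obstacle is the very first step: making precise how a finite extension of $K$ interacts with the motivic Galois group so that $G_{\mathcal{M}}$ becomes connected while the hypotheses on $G_{\mathcal{M}(E)}$ and on $\mathcal{M}$ are preserved. A secondary, pervasive difficulty is that the entire argument presupposes the Tannakian formalism for the category of pure motives over a number field, which is conditional on the standard conjectures on algebraic cycles and the Hodge conjecture hypothesized at the start of the section; in particular, one needs the semisimplicity of $\mathcal{M}$ together with the fact that every finite-dimensional $\Q$-representation of $G_{\mathcal{M}}$ is realized by a single motive, not merely an ind-object, in order for the representation $\rho\circ\tilde\varphi$ to give a bona fide motive $E'$.
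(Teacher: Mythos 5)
This statement is Conjecture 11.8 of Serre's paper, quoted here as a conjecture; the paper offers no proof of it (it is used only as a hypothesis in Theorem \ref{connection}), and it is open even granting the standard conjectures and the Hodge conjecture. So the real question is whether your argument actually proves it, and it does not.

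The fatal step is the assertion that every connected reductive cover of $G_{\mathcal{M}(E)}$ has the form $\widetilde{G}/F$ with $F$ contained in the \emph{finite} kernel $K_0$ of $G^{\sc}_{\mathcal{M}(E)}\times Z\to G_{\mathcal{M}(E)}$. This is false as soon as $Z$ is non-trivial: a torus admits infinitely many non-trivial isogenies onto itself (e.g.\ $z\mapsto z^n$ on $\mathbb{G}_m$), so the isogenies $G'\to G_{\mathcal{M}(E)}$ with $G'$ connected reductive do not form a finite poset dominated by a single universal object. Consequently your set $\gothF$ does not parametrize all candidate covers, the existence of a minimal element is not a consequence of finiteness of $K_0$, and the final maximality check (which only excludes covers corresponding to proper subgroups of $F^*$) says nothing about covers that are non-trivial on the central torus. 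This is precisely where the content of the conjecture lies: for the semisimple part the tower of covers is bounded by $G^{\sc}$, but for the central part one must show that the tower of isogenies through which $G_{\mathcal{M}}^{\circ}\to G_{\mathcal{M}(E)}$ factors stabilizes after a finite extension of $K$ --- e.g.\ for $G_{\mathcal{M}(E)}=\mathbb{G}_m$ this amounts to an arithmetic divisibility statement about the relevant character of $G_K$ (compare the paper's remark in the introduction that Serre's full openness for elliptic curves rests on the determinant being the cyclotomic character rather than a power of it). No formal Tannakian argument of the kind you give can settle this. A secondary error: $\pi_0(G_{\mathcal{M}})$ is the full Galois group $\Gal(\bar K/K)$ (via Artin motives), so no finite extension of $K$ makes $G_{\mathcal{M}}$ connected; the argument must be run with $G_{\mathcal{M}}^{\circ}$ throughout, as in the paper's definition of maximality.
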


\begin{thm}\label{connection}
Conjecture \ref{maximal}, Conjecture \ref{dominant}, and the Mumford-Tate conjecture imply Conjecture \ref{MT}.
\end{thm}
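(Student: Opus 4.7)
The plan is to reduce Conjecture \ref{MT} for the motive $E$ to Theorem \ref{Generation} applied to a maximal motive dominating $E$.

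First I apply Conjecture \ref{dominant}: after enlarging $K$ to a finite extension $K'$, there is a maximal motive $E'$ dominating $E$ with a connected covering isogeny $f\colon H':=G_{\mathcal{M}(E')}\to H:=G_{\mathcal{M}(E)}$. Under the standard conjectures, $H=G_{\mathcal{M}(E)}$ is identified with the Mumford--Tate group, and the Mumford--Tate conjecture identifies it further with every $\ell$-adic monodromy group $G_\ell$. Because $f$ is an isogeny of connected reductive groups, it induces an isogeny of derived groups, so their universal covers coincide canonically as $(H')^{\sc}\cong H^{\sc}$. By functoriality of the commutator lift, $\kappa_{H'}(\gamma_1,\gamma_2)=\kappa_H(f(\gamma_1),f(\gamma_2))$ under this isomorphism; and since $f$ sends $\Phi_{E'}(G_{K'})$ onto $\Phi_E(G_{K'})$, the sets $\kappa_{H'}(\Phi_{E'}(G_{K'}),\Phi_{E'}(G_{K'}))$ and $\kappa_H(\Phi_E(G_{K'}),\Phi_E(G_{K'}))$ coincide as subsets of $H^{\sc}(\A_f)$.

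Next I invoke Conjecture \ref{maximal}: since $E'$ is maximal, $\Phi_{E'}(G_{K'})$ is open in $H'(\A_f)$. Hence each $\pr_\ell\Phi_{E'}(G_{K'})$ is open (in particular Zariski-dense) in $H'(\Q_\ell)$, and $\Phi_{E'}(G_{K'})$ contains a product $\prod_\ell\Gamma_\ell$ with each $\Gamma_\ell$ open in $H'(\Q_\ell)$ and hyperspecial maximal compact for almost all $\ell$; for these $\ell$, $\Gamma_\ell^{\sc}$ is also hyperspecial in $(H')^{\sc}(\Q_\ell)$. So hypotheses (i) and (ii) of Theorem \ref{Generation} hold for $\Phi_{E'}(G_{K'})\subset\prod_\ell H'(\Q_\ell)$, and Theorem \ref{Generation} yields a $k$ such that $\kappa_{H'}(\Phi_{E'}(G_{K'}),\Phi_{E'}(G_{K'}))^k$ is special adelic in $\prod_\ell (H')^{\sc}(\Q_\ell)=\prod_\ell H^{\sc}(\Q_\ell)$, with the $k$-th power set also containing a special adelic subgroup for every $k\ge 2$.

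Since $\Phi_\A(G_K)\subset H(\A_f)$ by integrality at almost all primes, all $\kappa$-values lie in $H^{\sc}(\A_f)\subset\prod_\ell H^{\sc}(\Q_\ell)$, and by the argument of Proposition \ref{coro} a special adelic subgroup of the latter that is contained in $H^{\sc}(\A_f)$ is open in $H^{\sc}(\A_f)$. Combined with the equality of $\kappa$-sets from the first paragraph, this yields both parts of Conjecture \ref{MT} for $\Phi_E(G_{K'})$, and the conclusion propagates from $G_{K'}$ to $G_K$ since $[G_K:G_{K'}]<\infty$. The main obstacle is the first paragraph: verifying that the tannakian data produced by Conjecture \ref{dominant} yields an isogeny $f$ for which $\Phi_E(G_{K'})=f(\Phi_{E'}(G_{K'}))$ at the adelic level and which is compatible with the commutator lift $\kappa$. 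The remaining steps are essentially formal consequences of Theorem \ref{Generation} and the machinery of Sections 2 and 3.
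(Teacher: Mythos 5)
Your proof follows essentially the same route as the paper: apply Conjecture~\ref{dominant} to dominate $E$ by a maximal $E'$ with a connected covering $G_{\mathcal{M}(E')}\to G_{\mathcal{M}(E)}$, observe that the $\kappa$-sets coincide, use Conjecture~\ref{maximal} to get openness of $\Phi_{E'}(G_{K'})$ in $G_{\mathcal{M}(E')}(\A_f)$, verify hypotheses (i) and (ii) of Theorem~\ref{Generation}, and conclude via Theorem~\ref{Generation} and Proposition~\ref{coro}. One small imprecision worth flagging: the identification $H=G_{\mathcal{M}(E)}$ does \emph{not} follow from the standard conjectures alone --- those give only the inclusion $H\subset G_{\mathcal{M}(E)}$ of the Mumford--Tate group in the motivic Galois group (Serre, \cite[3.3]{Serre-GM}); equality requires combining $G_{\mathcal{M}(E)}\times\Q_\ell=G_\ell$ (a consequence of Conjectures~\ref{maximal} and \ref{dominant}) with $H\times\Q_\ell=G_\ell$ (the Mumford--Tate conjecture), exactly as the paper does --- but since all three hypotheses are available to you, this is a matter of attribution rather than a genuine gap.
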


\begin{proof}
Suppose $E$ is a motive such that the $\ell$-adic realization of $E$ 
is the $i$th \'etale cohomology of a non-singular projective variety $X$ over $K$.
Suppose the algebraic monodromy group $G_\ell$ of the representation $V_\ell$ is connected for all $\ell$.
The Mumford-Tate group $H$ is a subgroup of $G_{\mathcal{M}(E)}$ by \cite[3.3]{Serre-GM}.
Conjecture \ref{maximal} and Conjecture \ref{dominant} imply 
$G_{\mathcal{M}(E)}\times\Q_\ell=G_\ell$.
The Mumford-Tate conjecture implies 
$H\times\Q_\ell=G_\ell$.
Hence, we obtain $G_{\mathcal{M}(E)}=H$.

Let $\Gamma_\A$ be $\Phi_E(G_K)$. It is a compact subgroup of $\prod_\ell H^{\sc}(\Q_\ell)$.
By Proposition \ref{coro}, it suffices to show that $\kappa(\Gamma_\A,\Gamma_\A)^k$ is a 
special adelic subgroup of $\prod_\ell H^{\sc}(\Q_\ell)$ for some $k$ and contains
such a subgroup if $k\geq 2$.
Suppose $K$ is large enough  that $E$ is dominated by a maximal motive $E'/K$ by Conjecture \ref{dominant}.
Let $\Gamma_\A'$ be $\Phi_{E'}(G_K)$. Since $G_{\mathcal{M}(E')}\to G_{\mathcal{M}(E)}=H$
is a connected covering, $\kappa(\Gamma_\A,\Gamma_\A)=\kappa(\Gamma_\A',\Gamma_\A')$ in $H^{\sc}(\A_f)$.
Without loss of generality, we may assume $E$ is maximal. By Conjecture \ref{maximal}, $\Gamma_\A\subset H(\A_f)$ is an open subgroup.
Therefore, there exist a finite set $S$ of primes and compact open subgroups $\Gamma_\ell\subset H(\Q_\ell)$ for all $\ell\in S$
such that the following hold.
\begin{enumerate}
\item[(i)] $\prod_{\ell\in S}\Gamma_\ell\prod_{\ell\notin S} H(\Z_\ell)\subset \Gamma_\A\subset\prod_\ell H(\Q_\ell)$.
\item[(ii)] $\forall\ell\notin S$, the morphism $H^{\sc}(\Z_\ell)\to H(\Z_\ell)$ is well defined and $H^{\sc}(\Z_\ell)$ is hyperspecial maximal compact in $H^{\sc}(\Q_\ell)$. Hence, $H(\Z_\ell)^{\sc}=H^{\sc}(\Z_\ell)$.
\end{enumerate}
We are done by Theorem \ref{Generation}.
\end{proof}

\end{document}